\newcommand{\reals}{\mathbb{R}}
\newcommand{\integers}{\mathbb{Z}}
\newcommand{\norm}[1]{\left\| #1 \right\|}
\newcommand{\cut}[1]{{}}
\newcommand{\E}{\mathbb{E}}
\newcommand{\p}{\mathbb{P}}
\def\cA{\mathcal{A}}  
  \def\cF{\mathcal{F}}
  \def\cR{\mathcal{R}}
\def\cS{\mathcal{S}}  \def\cU{\mathcal{U}}
\def\cV{\mathcal{V}}  
\crefname{hypothesis}{Hypothesis}{Hypotheses}
\title{Mean-field Approximations for Stochastic Population Processes with Heterogeneous Interactions
%\thanks{Submitted to the editors DATE.\funding{\note{add funding}}}
}
\author{Anirudh Sridhar\thanks{Department of Electrical and Computer Engineering, Princeton University, Princeton, NJ 
  (\email{anirudhs@princeton.edu}).}
\and Soummya Kar\thanks{Department of Electrical and Computer Engineering, Carnegie Mellon University, Pittsburgh, PA 
  (\email{soummyak@andrew.cmu.edu}).}
}
\newcommand{\revision}[1]{#1}
\begin{document}

\nolinenumbers

\maketitle

% REQUIRED
\begin{abstract}
This paper studies a general class of stochastic population processes in which agents interact with one another over a network. Agents update their behaviors in a random and decentralized manner according to a policy that depends only on the agent's current state and an estimate of the macroscopic population state, given by a weighted average of the neighboring states. When the number of agents is large and the network is a complete graph (has all-to-all information access), the macroscopic behavior of the population can be well-approximated by a set of deterministic differential equations called a {\it mean-field approximation}. For incomplete networks such characterizations remained previously unclear, i.e., in general whether a suitable mean-field approximation exists for the macroscopic behavior of the population. The paper addresses this gap by establishing a generic theory describing when various mean-field approximations are accurate for \emph{arbitrary} interaction structures. 

Our results are threefold. Letting $W$ be the matrix describing agent interactions, we first show that a simple mean-field approximation that incorrectly assumes a homogeneous interaction structure is accurate provided $W$ has a large spectral gap. Second, we show that a more complex mean-field approximation which takes into account agent interactions is accurate as long as the Frobenius norm of $W$ is small. Finally, we compare the predictions of the two mean-field approximations through simulations, highlighting cases where using mean-field approximations that assume a homogeneous interaction structure can lead to inaccurate qualitative and quantitative predictions. 
\end{abstract}

% REQUIRED
\begin{keywords}
stochastic population processes, networked interactions, mean-field approximation, concentration inequalities
\end{keywords}

% REQUIRED
\begin{AMS}
05C99, 60G07, 60G17, 60J27, 60J75, 91A22, 91A43, 93E03
\end{AMS}

\section{Introduction}
We consider the problem of analysis and approximation of stochastic dynamics that emerge in large networks of interacting agents. 
%Initially, agents choose an action from a fixed, finite set and update their actions asynchronously at random times. When an agent updates their action, they do so according to a probabilistic policy that depends on their current action and the \revision{empirical distribution of actions in the population}. 
It is often of interest to study the {\it macroscopic} behavior of such processes -- that is, the evolution of the fraction of agents playing a particular action -- which we refer to as the {\it population process}. The analysis of population processes are important in several disciplines, including evolutionary biology \cite{jonker,JMS, schuster_replicator,hofbauer_evolutionary_game_dynamics}, epidemiology \cite{hethcote, fall_lyapunov, mei_network_epidemics, khanafer2016, mieghem2009, pare_epidemics_review}, game theory and controls  \cite{szabo2007, madeo_network_pop_games, como_network_pop_games, ohtsuki2006, iacobelli2016lumping, nce, decentralized_mfg} and economics \cite{sandholm2010popgames, weibull}. However, when the number of agents involved is large, such processes are challenging to study analytically and computationally. A common workaround is to approximate complex stochastic processes with a differential equation known as a \emph{mean-field approximation} (MFA), which is generally more amenable to analysis. 
Perhaps most strikingly, MFAs provide a tractable way to study the {\it emergent behavior} of large-scale systems, and several authors have used them as a starting point for the tractable multi-agent control of large populations (see, e.g., \cite{Lasry2007}). The success of these controls in practical systems relies fundamentally on the {\it accuracy} of MFAs.

The accuracy of MFAs was first studied formally by Kurtz \cite{kurtz1976}, who showed that population processes concentrate around their MFAs over finite time horizons, provided that agent interactions are {\it homogeneous} (i.e., all agents act based on the distribution of actions in the {\it full} population). However, in applications such as network games and epidemiology, it is more typical that agent interactions are {\it heterogeneous} (i.e., agents act based on the distribution of actions within a {\it subset} of the population). Two natural questions therefore emerge: (1) Are mean-field approximations still useful when interactions are heterogeneous? (2) Are there alternate approximations one can use when the mean-field approximation is inaccurate? 
Despite the fundamental importance of these questions, prior work only provides partial answers. Towards answering (1), existing work (see, e.g., \cite{OliveraReis_diverging_degree, DelattreGiacominLucon2016, CoppiniDietertGiacomin2020, SantosThermo}) has shown that MFAs are accurate under simple models of {\it random} interactions (e.g., given by an Erd\H{o}s-R\'{e}nyi graph). Towards answering (2), many authors have proposed more complex MFAs depending on the interaction structure which are known to be accurate when the interaction structure converges to a well-defined limiting object \cite{spatial_sis, hwang_spatial, Kar_supernode, KELIGER2022, bayraktar2021graphon, graphon_mfg_1, santos_moura_multipartite, ganguly_ramanan}. 
%These are known to be accurate when the interaction structures converge to an appropriate limiting object when the number of agents tends to infinity (e.g., spatial interactions \cite{hwang_spatial}, multi-partite interactions \cite{Kar_supernode, santos_moura_multipartite}, graph limits \cite{KELIGER2022, bayraktar2021graphon, ganguly_ramanan}). 
While these works make significant progress in answering (2), they provide little insight into population processes driven by {\it finite and arbitrary} interactions. Moreover, in all the aforementioned works related to (2), it is unclear whether it is truly {\it necessary} to use complex MFAs instead of the simpler MFAs which assume homogeneous interactions. Indeed, empirical results indicate that in many instances of heterogeneous interactions, the simpler MFAs may still provide useful predictions \cite{accuracy_MFT}. 
\vspace{-0.2cm}
\subsection{Contributions} 
In this work, we establish a new, generic theory that describes \emph{when} and \emph{how} insights from MFAs can be translated to their stochastic counterparts under minimal assumptions. 
In particular, we provide rigorous answers to questions (1) and (2) for {\it generic} interaction structures. Our model of agent interactions is a multi-agent Markov jump process, wherein agents update their actions at a rate which depends on the empirical distribution of their (appropriately-defined) neighborhood; such models are natural in game theory \cite{szabo2007, madeo_network_pop_games, como_network_pop_games, ohtsuki2006, iacobelli2016lumping, sandholm2010popgames}, epidemiology \cite{fall_lyapunov, mei_network_epidemics, khanafer2016, mieghem2009, pare_epidemics_review, van_mieghem_intertwined, van_mieghem_2014} and interacting particle systems \cite{gleeson_high_accuracy, gleeson_approx}. We relate the deviation between the stochastic population process and the corresponding MFA to the \emph{density} of agent interactions, showing in particular that denser interactions lead to tighter approximations by MFAs. To make this precise, we introduce two measures of density: the first, which we call the \emph{local density}, measures the strength of local or pairwise interactions between agents, and the second, called the \emph{spectral density}, depends on the global structure of interactions as measured by operator-theoretic properties of the matrix capturing the interactions. Our results and methods are succinctly described below.

{\it Robustness of classical MFAs.} Our first main result shows that if the interactions between agents are {\it spectrally dense}, then a simple MFA which incorrectly assumes a homogeneous interaction structure between agents (henceforth called the {\it classical MFA}), is, surprisingly, a good approximation for stochastic population processes driven by heterogeneous interactions. In other words, the usage of more complex MFAs to approximate the stochastic behavior is unnecessary such cases. Our proof uses the crucial fact that our definition of spectral density bounds the average deviation between the empirical distribution of neighboring actions and that of the full population. 
%Recalling that the case of homogenous interactions is when agents act based on the distribution of actions within the {\it full} population rather than a neighborhood, it follows that the deviation between the homogenous and heterogenous population processes is bounded by a function of the spectral density. 
As a result, the population process resembles a system driven by {\it homogeneous} interactions, for which the classical MFA is known to be accurate.
%Correspondingly, the classical MFA yields accurate predictions for the heterogenous population process provided the spectral density is small. 
%On a technical note, our methods here also provide a gentle introduction to the more complicated techniques we use to study population processes with {\it locally dense} interactions. 
For details, see Theorem \ref{thm:deviation_rapidly_mixing}.

{\it General approximation of population processes by MFAs.} We show that if agent interactions are {\it locally dense}, then a MFA which takes the interactions into account, commonly known as a $N$-intertwined mean-field approximation (NIMFA), provides a good approximation of the stochastic population process. This is our most general and technically challenging result. 
%In previous work on finite, non-asymptotic interactions, it was assumed that agent interactions were {\it homogenous} \cite{BenaimWeibull, sandholm_staudigl_2018}, in which case the population process is a Markov process. From here, standard concentration results for Markov processes could be applied to prove the accuracy of the classical MFA. On the other hand, in our setting of {\it generic} interactions, the population process does {\it not} evolve autonomously due to the lack of symmetry in the system. 
In prior work on homogeneous interactions, the population-level behavior is a Markov process, and standard techniques for Markov processes could be applied to prove the accuracy of MFAs. 
However, since we allow for {\it arbitrary} interactions between agents in this work, the population-level behavior is {\it not} Markov due to the lack of symmetry in the system.
To get around this issue, we construct a family of auxiliary processes which {\it can} be studied in an autonomous manner. At the same time, our design of the auxiliary processes ensures that we do not incur a significant loss in our ultimate probabilistic bounds. Indeed, we are able to show that the probability of error between the population process and the NIMFA decays exponentially in $N$ when interactions are locally dense, which is also known to be the case for homogeneous interactions \cite{BenaimWeibull, sandholm_staudigl_2018}. For details, see Theorem \ref{thm:deviation}.

{\it Choosing the right approximation.} Although our results show that the NIMFA is generally more accurate than the classical MFA, it is still unclear whether the classical MFA is still useful in cases where agent interactions are {\it not} locally dense. Through simulations, we show that the answer depends on the {\it initial conditions} of the stochastic process. When agents' initial states are {\it random}, the classical MFA may still yield accurate predictions. On the other hand, we show that under more structured initial conditions, the classical MFA leads to highly inaccurate predictions {in both transient and steady-state regimes}.
%, while the NIMFA remains accurate. 
For details, see Section \ref{sec:choosing}.
\vspace{-0.2cm}

\subsection{Further related work} While the general concept of a mean-field approximation is of broad utility, the form it takes (e.g., ordinary differential equation, partial differential equation, stochastic differential equation) can depend drastically on the specific model of multi-agent interactions that are considered. In this work, we focus our attention to a natural class of Markov jump processes, though we note that other models (e.g., interacting diffusions, decentralized optimal control) have also received significant attention (see, e.g., \cite{OliveraReis_diverging_degree, DelattreGiacominLucon2016, CoppiniDietertGiacomin2020, ganguly_ramanan, bayraktar2021graphon, graphon_mfg_1, nce, decentralized_mfg}). \\
\indent Most of the literature related to our model studies the convergence of the population process to a mean-field approximation in asymptotic regimes where the number of agents tend to infinity \cite{hwang_spatial, Kar_supernode, santos_moura_multipartite, KELIGER2022,ganguly_ramanan}, however such methods do not apply to our case since we consider the case of {\it finite} and {\it arbitrary} interactions for which such limiting behaviors are not well-defined. In the case of {\it homogeneous} interactions, Bena\"{i}m and Weibull \cite{BenaimWeibull} provided a sharp \emph{non-asymptotic} analysis of the stochastic population process. Since we consider arbitrary and fixed interaction structures, Bena\"{i}m and Weibull's non-asymptotic techniques are a natural inspiration for the ones we develop in this work. Finally, we remark that after an initial draft of our work was posted online \cite{sridhar_kar_arxiv_2021}, we came across related work by Horv\'{a}th and Keliger \cite{horvath2022accuracy}. They also consider the accuracy of the NIMFA for a slightly more general model of agent interaction under finite and arbitrary interaction structures. However, their probability bounds are considerably weaker than ours (see Remark \ref{remark:comparison} for a more detailed comparison). 
\vspace{-0.2cm}
\subsection{Organization} The rest of the paper is structured as follows. Section \ref{sec:notation} introduces some notation that we use throughout the paper. Section \ref{sec:model} formally describes the class of stochastic processes we consider, derives the corresponding MFAs and discusses related work. Section \ref{sec:applications} highlights concrete applications of our theory to game theory and epidemiology. In Section \ref{sec:results}, we describe our main results -- specifically, our first and second contributions. In Section \ref{sec:choosing}, we investigate through simulations the accuracy of the MFAs we consider, addressing our third main contribution. The remaining sections contain the proofs of our main results. 
\vspace{-0.1cm}
\section{Notation}
\label{sec:notation}
Let $\reals$, $\reals_{\ge 0}, \integers, \integers_{\ge 0}$ denote the set of real numbers, non-negative real numbers, integers, and non-negative integers, respectively. For a finite set $A$, $\reals^A$ is the set of $|A|$-dimensional vectors with entries indexed by elements of $A$. For $1 \le p < \infty$ and $x \in \reals^n$, we define the $\ell_p$ norm $\| x \|_p : = ( \sum_{i = 1}^n |x_i|^p )^{1/p}$. We also define $\| x \|_\infty : = \max_{1 \le i \le n} |x_i |$. For a finite set $A$, we define the set of \emph{mixed states} to be $\Delta(A) : = \{ x \in \reals_{\ge 0}^A : \sum_{a \in A} x_a = 1 \}$, and
{$\cV(A)$ denotes the set of standard basis vectors in $\mathbb{R}^A$.}
We also denote $\mathbf{0}$ and $\mathbf{1}$ to be vectors with entries that are all equal to 0 or 1, respectively. Throughout, we utilize standard asymptotic notation (e.g., $O(\cdot)$ and $\Omega( \cdot)$). Finally, we write $\mathbf{1}(\cdot)$ to be an indicator function that takes the value 1 when the condition inside the parentheses are met, else it is zero. 

\section{Stochastic population processes and MFAs}
\label{sec:model}

Consider a population of $N$ agents, indexed by the set $[N] : = \{1, \ldots, N \}$. At any point in time $t \ge 0$, each agent has an associated state $s_i(t)$ which is an element of a fixed, finite set $\cS$. The agent is also equipped with an independent Poisson clock with rate $r_i$ (that is, a Poisson process with rate $r_i$) such that when the agent's clock rings, the agent is allowed to update their state. Upon a clock ring, agents update their state via a probabilistic policy that is specified by the collection of functions $\boldsymbol{\rho} : = \{\rho_i^{\alpha \beta} \}_{\alpha, \beta \in \cS, i \in [N]}$ as well as a row-stochastic information aggregation matrix $W = [ w_{ij} ]_{i,j \in [N]} \in \reals^{N \times N}$. If agent $i$'s clock rings at time $t$, they first compute a {\it local population estimate} $\overline{Y}_i(t) \revision{= \{ \overline{Y}_i^\alpha(t) \}_{\alpha \in \cS}} \in \reals^{\cS}$. 
%which is a linear combination of other agents' states. 
Specifically, if we define $Y_i^\alpha(t) : = \mathbf{1}( s_i(t) = \alpha)$, the local population estimates are computed according to
$$
\overline{Y}_i^\alpha(t) := \sum\limits_{j \in [N]} w_{ij} Y_j^\alpha(t).
$$
\revision{In other words, $\overline{Y}_i^\alpha(t)$ is the weighted fraction of agents in $i$'s neighborhood with state $\alpha$ at time $t$.}
Since $\overline{Y}_i(t)$ is a convex combination of neighboring agents' states, it can be interpreted as agent $i$'s estimate of the population behavior.\footnote{In this model, the local estimates are {\it linear} functions of the neighborhood states. In principle, one could use a nonlinear aggregation function, but we focus on the linear case as it is simple and natural to study. This form of local estimate formation through neighborhood averaging is relevant to and arises in problems of distributed information processing, see for example, DeGroot's consensus formation model~\cite{degroot1974reaching} as well as other models in optimization, control and game-theoretic computations in networks, see~\cite{tsitsiklis1986distributed,jadbabaie2003coordination,dimakis2010gossip,swenson2015empirical}.}
{We say $W$ is \emph{homogeneous} if $w_{ij} = 1/N$ for all $i,j$, and otherwise it is \emph{heterogeneous}.}
{In the homogeneous case}, the local population estimate reduces to the \emph{true population average} $Y_{av}(t)$, which is the case studied by Kurtz \cite{kurtz}. If $s_i(t) = \alpha$, agent $i$ changes their state to $\beta \in \cS$ with rate given by $\rho_i^{\alpha \beta} ( \overline{Y}_i(t))$. We assume the following about $\boldsymbol{\rho}$.
%that the rate functions, described by $\boldsymbol{\rho}$, are bounded and Lipschitz. 

\begin{assumption}
\label{as:rho}
There exists a constant $L_\rho \ge 0$ such that for all $i \in [N]$ and all $\alpha, \beta \in \cS$, $\rho_i^{\alpha \beta} : \Delta(\cS) \to [0,1]$ and $\rho_i^{\alpha \beta}$ is $L_\rho$-Lipschitz.
\end{assumption}

The stochastic process we have described -- which we henceforth call a \emph{stochastic population process} -- can be succinctly represented as $\{ \mathbf{Y}(t) \}_{t \ge 0}$, where $\mathbf{Y}(t) = \{ Y_i(t) \}_{i \in [N]}$ is the set of agent states at time $t$. 
Formally, $\mathbf{Y}(t)$ is a continuous-time Markov process of jump type, and has state space {$\cV(\cS)^N$}.

For large populations, it is challenging to analytically study or even simulate stochastic population processes. To get around this issue, a typical approach is to instead study an appropriate mean-field approximation. The most basic such approximation assumes (potentially incorrectly) that agent behaviors are identical ($r_i = r$ and $\rho_i^{\alpha \beta} = \rho^{\alpha \beta}$ for all $i \in [N]$) and that the local population estimates are actually \emph{equal} to the true population average, given by $Y_{av}(t) : = \frac{1}{N} \sum_{i \in [N]} Y_i(t)$. The corresponding mean-field approximation $x(t)$ is then given by the following differential equation:
\begin{equation}
\label{eq:classical_mean_field}
\dot{x}^\alpha = r\sum\limits_{\beta \in \cS} \left(  x^\beta \rho^{\beta \alpha}(x) - x^\alpha \rho^{\alpha \beta }(x) \right) = :\phi^\alpha(x), \qquad \alpha \in \cS.
\end{equation}
Equation \eqref{eq:classical_mean_field}, which we call the \emph{classical mean-field approximation} (CMFA), can be explained as follows. Suppose that $x(t)$ tracks the true population average over time. The term $x^\beta \rho^{\beta \alpha}(x)$ represents the possibility that a single updating agent is in state $\beta$ and switches from $\beta$ to $\alpha$. Hence $\sum_{\beta \in \cS} x^\beta \rho^{\beta \alpha}(x)$ represents the \emph{influx} to the set of agents in state $\alpha$. On the other hand, $\sum_{\beta \in \cS} x^\alpha \rho^{\alpha \beta}(x)$ represents the possibility that the updating agent is currently in state $\alpha$ and decides to switch to another state; hence this term captures the \emph{outflux} from the set of agents in state $\alpha$. Significantly, the complex stochastic dynamics collapse into a $|\cS|$-dimensional ordinary differential equation (ODE), paving the way for a tractable analysis through both theory and simulation.

Even though the assumptions underlying the CMFA are overly simplistic, it often predicts the right qualitative behavior observed in practice, and in some cases, approximates the behavior of $Y_{av}(t)$ very well even when interactions are heterogeneous (see, e.g., \cite{accuracy_MFT}). In other words, the CMFA is quite robust. Various heuristic and semi-rigorous arguments have been proposed to explain this robustness \cite{accuracy_MFT,mieghem_approx, cong2012comparison}, but until this work there has been no formal theory to justify it.

To get around the restrictive assumptions of the CMFA, one can also consider a mean-field approximation that accounts for the various heterogeneities of the stochastic population process. In this MFA, the state variables are given by $\mathbf{y}(t) = \{ y_i(t) \}_{i \in [N]}$, where $y_i(t) \in \Delta( \cS)$. We also define
$$
\overline{y}_i(t) : = \sum\limits_{i \in [N]} w_{ij} y_j(t). 
$$
The MFA is given by
\begin{equation}
\label{eq:mean_field}
\dot{y}_i^\alpha = r_i \sum\limits_{\beta \in \cS} \left( y_i^\beta \rho_i^{\beta \alpha}( \overline{y}_i) - y_i^\alpha \rho_i^{\alpha \beta}( \overline{y}_i) \right) = : \Phi_i^\alpha ( \mathbf{y}), \qquad  i \in [N], \alpha \in \cS. 
\end{equation}
More compactly, we may also write $\dot{\mathbf{y}} = \Phi(\mathbf{y})$. Since \eqref{eq:mean_field} is a system of $N$ co-dependent differential equations, each of dimension $|\cS|$, we call \eqref{eq:mean_field} the \emph{$N$-intertwined mean-field approximation} (NIMFA). The NIMFA can be explained as follows. 
Conditioned on $\mathbf{Y}(t)$ and the event that agent $i$'s clock rings at time $t$,
the quantity $\rho_i^{\beta \alpha} ( \overline{Y}_i(t))$ is the probability that agent $i$ changes state from $\beta$ to $\alpha$. On the other hand $\sum_{\beta \in \cS} \rho_i^{\alpha \beta} ( \overline{Y}_i(t))$ is the probability that agent $i$'s state changes from $\alpha$ to another state (potentially including $\alpha$). Noting that the probability that agent $i$ updates in an interval of size $\delta$ is approximately $\delta r_i$ for small $\delta$, it follows that
$$
\E [ Y_i^\alpha(t + \delta) - Y_i^\alpha(t) \vert \mathbf{Y}(t)] = \delta r_i \sum\limits_{\beta \in \cS} \left( Y_i^\beta(t) \rho_i^{\beta \alpha} (\overline{Y}_i(t)) - Y_i^\alpha(t) \rho_i^{\alpha \beta} ( \overline{Y}_i(t)) \right) + o(\delta).
$$
Replacing $Y_j^\alpha(t)$ by $y_j^\alpha(t)$ for all $j \in [N], \alpha \in \cA$ and sending $\delta \to 0$ yields \eqref{eq:mean_field}. Consequently, $y_i(t)$ can be thought of as a first-order approximation of $\E [ Y_i(t) ]$. We can also approximate the population average $Y_{av}(t)$ by $y_{av}(t) : = \frac{1}{N} \sum_{i \in [N] } y_i(t)$. 

Compared to the CMFA, the NIMFA's history is more recent. It was initially studied in the context of epidemiology by Lajmanovich and Yorke \cite{lajmanovich1976}, who called it a $N$-group model. It later gained significant attention from the epidemiology, physics and controls community \cite{fall_lyapunov, mei_network_epidemics, khanafer2016, mieghem2009, pare_epidemics_review, van_mieghem_intertwined, van_mieghem_2014}, with the terminology ``NIMFA" introduced by Van Mieghem and co-authors \cite{van_mieghem_intertwined, van_mieghem_2014, mieghem2009}. NIMFAs have also been studied in the context of binary state dynamics (e.g., spin dynamics, voter models) \cite{gleeson_high_accuracy, gleeson_approx} as well as population games \cite{szabo2007, madeo_network_pop_games, como_network_pop_games, ohtsuki2006, iacobelli2016lumping}. Although the NIMFA is still quite complex compared to the CMFA, some analysis is still possible, such as the characterization of equilibria (see \cite{lajmanovich1976} as well as \cite{fall_lyapunov} and references within). A significant issue with the NIMFA is that until this work, there were no existing guarantees on how well it approximates its stochastic counterpart in general.

We make a few further remarks about the NIMFA. As it has the ability to account for heterogeneous agent behavior (e.g., distinct $r_i$ and $\rho_i^{\alpha \beta}$), it is more widely applicable than the CMFA. Even when agent behaviors are identical, however, it is not hard to find examples where the CMFA and the NIMFA yield different predictions for the population average. To see why this is the case in a bit more detail, recall that the CMFA is accurate under the assumption that the local population estimates $\overline{Y}_i(t)$ are equal to the population average $Y_{av}(t)$. However, the CMFA fails to capture what happens when the local population estimates deviate significantly from the population average, whereas the NIMFA \emph{can} capture such effects due to its granularity. We elaborate on the accuracy of the CMFA and NIMFA in Section \ref{sec:choosing}.

\section{Applications}
\label{sec:applications}

We highlight the importance of understanding stochastic population processes and MFAs through applications in game theory and epidemiology. 

\subsection{Population games}
\label{subsec:population_games}

In population games \cite{sandholm2010popgames}, agents can play actions in a finite set $\cS$, and the utility associated to a particular action depends on the action played by the individual as well as the \emph{distribution} of actions within the population.\footnote{In classical game theory, payoffs are allowed to depend on the individual actions of other players rather than the aggregate measure we discuss here.} We mathematically describe the utility structure by a function $\cU : \Delta( \cS) \to \reals^{\cS}$, with $\cU(x) = \{ \cU^\alpha(x) \}_{\alpha \in \cS}$ for $x \in \Delta(\cS)$. In particular, $\cU^\alpha(x)$ represents the utility of playing $\alpha$ when the distribution of actions within the population is given by $x$. While it is possible to study the static game described by $\cU$, an interesting question is \emph{how} agents interact with each other when their behaviors are motivated by the utilities given by $\cU$. The stochastic model describing agent interactions assumes that $r_i = 1$ for all $i \in [N]$ and $\rho_i^{\alpha \beta} = \rho^{\alpha \beta}$ for all $i \in [N], \alpha, \beta \in \cS$. There are various natural choices of $\boldsymbol{\rho}$ depending on how agents respond to utilities \cite{sandholm2010popgames}. A question of significant interest is whether such natural agent behaviors lead to desirable game-theoretic outcomes such as Nash equilibria. Researchers typically answer this question by a direct analysis of the corresponding CMFA or NIMFA \cite{sandholm2010popgames, network_games}. A key gap in the literature addressed in this paper is how well the MFAs approximate the true stochastic population process.

%In light of Theorem \ref{thm:deviation}, the NIMFA \eqref{eq:mean_field} accurately models the population process over finite time intervals provided $\theta(W)$ is sufficiently small and $W$ has bounded column sums. 

\subsection{Epidemic models}
\label{subsec:epidemic}
In this section, we consider the Susceptible-Infected-Susceptible (SIS) process, though we remark that our analysis can be readily extended to similar models. The stochastic SIS process in heterogeneous populations is parametrized by an interaction matrix $A \in \reals_{\ge 0}^{N \times N}$, an infection rate $b > 0$ and a recovery rate $\revision{\delta} > 0$. At any point in time, agents are either susceptible or infected. Directed interactions between agent $i$ and agent $j$ occur on the rings of a Poisson clock with rate $b A_{ij}$. If agent $i$ is susceptible and agent $j$ is infected when the interaction occurs, agent $i$ becomes infected. Once infected, an agent recovers and becomes susceptible again after an $\mathrm{Exponential}(\revision{\delta})$ amount of time. 

The stochastic SIS process turns out to be a special case of the dynamics we consider, which is made evident by the following equivalent construction. Let $\mathbf{Y}(t) = \{ ( Y_i^S(t), Y_i^I(t)) \}_{i \in [N]}  \in  \{ (1,0), (0,1) \}^N$ be the state variable tracking the evolution of the epidemic, where $Y_i^S(t) = 1$ if agent $i$ is susceptible at time $t$, else $Y_i^I(t) = 1$ if agent $i$ is infected at time $t$. For each $i \in [N]$, set $d_i : = b \sum_{j \in [N]} A_{ij}$ and $r_i : =  d_i + \revision{\delta}$. It follows from the definition of the stochastic SIS process that agent $i$ potentially updates their state (susceptible or infected) upon the rings of a Poisson clock with rate $r_i$. Next, we define the matrix $W : = b D^{-1} A$ where $D$ is a diagonal matrix with $D_{ii} = d_i$ so that in particular, $W$ is row-stochastic. Given $W$, we define the local population estimate $\overline{Y}_i(t) = ( \overline{Y}_i^S(t), \overline{Y}_i^I(t))$ in the usual sense. If agent $i$ is susceptible when the clock rings, they become infected with probability 
\begin{equation}
\label{eq:SI_transition}
\frac{ b \sum_{j \in [N]} A_{ij} Y_j^I(t) }{d_i + \revision{\delta}} = \frac{ \frac{b}{d_i} \sum_{j \in [N]} A_{ij} Y_j^I(t) }{ 1 + \revision{\delta} / d_i} = \left( 1 + \frac{\revision{\delta}}{d_i} \right)^{-1} \overline{Y}_i^I = : \rho^{SI}_i \left( \overline{Y}_i(t) \right).
\end{equation}
On the other hand if agent $i$ is infected when the clock rings, they become susceptible with probability 
\begin{equation}
\label{eq:IS_transition}
\frac{\revision{\delta}}{d_i + \revision{\delta}} =: \rho_i^{IS}\left( \overline{Y}_i(t) \right).
\end{equation}
From \eqref{eq:SI_transition} and \eqref{eq:IS_transition}, it is clear that the stochastic SIS process is indeed a special case of the Markov jump processes we consider, where the transition rates are linear or constant functions of the local population estimates.\footnote{To see this more formally, it is straightforward to show that the generator of the process induced by the transition functions in \eqref{eq:SI_transition} and \eqref{eq:IS_transition} is the same as the generator for the SIS process derived in \cite{mieghem2009}.} Correspondingly, the deterministic process $\dot{\mathbf{y}}= \Phi(\mathbf{y})$ is given by
$\dot{y}_i^I = b y_i^S \sum_{j \in [N]} A_{ij} y_j^I - \revision{\delta} y_i^I$ for $i \in [N]$,
with $\dot{y}_i^S = - \dot{y}_i^I$. As discussed in Section \ref{sec:model}, the bulk of the literature of epidemic models with heterogeneous agent interactions directly studies the NIMFA. 
Little was previously known of how well the NIMFA approximates the true population process. In \cite{mieghem2009}, Van Mieghem, Omic, and Kooij derived an expression for the variance of local population estimates and provided heuristic explanations for the accuracy of the NIMFA in certain parameter regimes. Our work provides a significant generalization of these ideas that holds under minimal assumptions on the stochastic population process, with probability bounds that are much sharper than variance-based estimates.

%Applying Theorem \ref{thm:deviation} to the stochastic SIS process with a time scaling according to Remark \ref{remark:deviation} shows that $y_{av}(t) = \frac{1}{N} \sum_{i \in [N]} y_i$ is a good approximation for $Y_{av}(t)$, provided $\theta(W)$ is sufficiently small and $W$ has bounded column sums.

\section{Results}
\label{sec:results}

\subsection{Preliminaries}

It is well-known that the CMFA \eqref{eq:classical_mean_field} is a good approximation for the population average $Y_{av}(t)$ under homogeneous interactions ($W = \mathbf{1 1}^\top  / N$) and identical agent behavior (i.e., $r_i= r, \rho_i^{\alpha \beta}= \rho^{\alpha \beta}$). Perhaps the most quantitative version of this fact was proved by Bena\"{i}m and Weibull \cite{BenaimWeibull}, which we state below. 

\begin{theorem}
\label{thm:bw}
Let $N$ be the size of the population and assume that $W = \mathbf{11}^\top / N$ and that for all $i \in [N]$ and all $\alpha, \beta \in \cS$, $r_i = 1$, and $\rho_i^{\alpha \beta} = \rho$. Let $x(t)$ solve $\dot{x} = \phi(x)$ with initial condition $x(0) = Y_{av}(0)$. For any time horizon $T > 0$, there is a constant $c = c(T)$ such that for all $\epsilon > 0$,
$$
\p \left( \sup\limits_{0 \le t \le T} \norm{ Y_{av}(t) - x(t) }_\infty > \epsilon  \right) \le 2 | \cS | e^{ - c N \epsilon^2}.
$$
\end{theorem}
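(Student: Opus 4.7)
The plan is a standard martingale approach: decompose the population average via Dynkin's formula, compare with the CMFA solution using Gr\"{o}nwall's inequality, and control the residual martingale via an exponential concentration bound for pure-jump martingales.

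First, under the theorem's homogeneity assumptions, every local estimate $\overline{Y}_i(t)$ reduces to the population average $Y_{av}(t)$, and a direct computation of the Markov generator of $\{\mathbf{Y}(t)\}$ acting on the coordinate function $Y_{av}^\alpha$ shows that the drift equals $\phi^\alpha(Y_{av})$ exactly (the $\beta=\alpha$ terms cancel in both expressions). Dynkin's decomposition therefore gives, for each $\alpha \in \cS$,
$$Y_{av}^\alpha(t) \;=\; Y_{av}^\alpha(0) + \int_0^t \phi^\alpha(Y_{av}(s))\, ds + M^\alpha(t),$$
where $M^\alpha$ is the pure-jump martingale obtained by compensating the $\pm 1/N$ jumps of $Y_{av}^\alpha$.

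Second, subtracting the integral form of the ODE $\dot{x}=\phi(x)$ and invoking the Lipschitz hypothesis in Assumption \ref{as:rho}, one finds that $\phi$ is Lipschitz in $\norm{\cdot}_\infty$ with some constant $L$ depending only on $L_\rho$ and $|\cS|$. Taking $\norm{\cdot}_\infty$, passing to the supremum, and applying Gr\"{o}nwall's inequality yield
$$\sup_{0 \le t \le T} \norm{Y_{av}(t) - x(t)}_\infty \;\le\; e^{LT}\, \max_{\alpha \in \cS}\, \sup_{0 \le t \le T} |M^\alpha(t)|.$$
The stated probability bound therefore reduces to a scalar tail estimate for each jump martingale $M^\alpha$.

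Third, each $M^\alpha$ has jumps of size at most $1/N$ and total jump rate at most $N$ (every agent updates at unit rate), so its predictable quadratic variation satisfies $\langle M^\alpha \rangle_T \le T/N$ almost surely. By Freedman's inequality for pure-jump martingales, for $\epsilon' \in (0,T]$,
$$\p\!\left( \sup_{0 \le t \le T} |M^\alpha(t)| > \epsilon' \right) \;\le\; 2 \exp\!\left( -\frac{N (\epsilon')^2}{4T} \right).$$
Setting $\epsilon' = \epsilon\, e^{-LT}$ and union-bounding over $\alpha \in \cS$ yields the theorem with $c(T) = e^{-2LT}/(4T)$ (the regime $\epsilon' > T$ makes the bound trivial after adjusting constants).

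The main obstacle is the concentration step: to obtain the Gaussian-type exponent $-cN\epsilon^2$ one must exploit \emph{both} the $1/N$ jump sizes and the $O(1/N)$ predictable quadratic variation, with careful bookkeeping so that the constants depend only on $T$, $L_\rho$ and $|\cS|$. The rest of the argument is essentially routine; the homogeneity hypothesis enters only in the first step, where it ensures that no bias terms appear in the Dynkin decomposition of $Y_{av}$.
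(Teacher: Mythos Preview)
Your proposal is correct and follows essentially the same route as the paper: the paper does not prove Theorem~\ref{thm:bw} directly (it is cited from Bena\"{i}m and Weibull), but its proof of the more general Theorem~\ref{thm:deviation_rapidly_mixing} specializes, when $\lambda(W)=0$, to precisely your Dynkin decomposition, Gr\"{o}nwall argument, and Freedman-based martingale bound (the latter being the content of Lemma~\ref{lemma:Mav_concentration}, proved in Appendix~\ref{sec:Mav_concentration} via the continuous-time Freedman inequality with the same $1/N$ jump and $T/N$ quadratic-variation estimates you use).
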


In words, Theorem \ref{thm:bw} states that the population average of the stochastic population process concentrates around the mean-field approximation \eqref{eq:classical_mean_field} over bounded time intervals. Notice that Theorem \ref{thm:bw} is a \emph{non-asymptotic} concentration result, unlike the original work of Kurtz \cite{kurtz1976}. This aspect of the theorem is particularly important for us, since $N \to \infty$ asymptotics are well-defined in the homogeneous interactions case but not for the finite, arbitrary interaction structures we consider. For this reason, we adapt the general approach taken by Bena\"{i}m and Weibull as opposed to others. Unfortunately, the proof of Theorem \ref{thm:bw} relies heavily on the symmetries induced by homogeneous interactions and therefore cannot be directly extended to account for general $W$. Nevertheless, we show that if $W$ is sufficiently dense, the MFAs \eqref{eq:classical_mean_field} or \eqref{eq:mean_field} are good approximations for the stochastic population process in the sense of Theorem \ref{thm:bw}. To make this result precise, we introduce two notions of density. For both, a small value indicates that $W$ is dense and a larger value indicates that it is sparse. Our first density measure depends on the strength of local interactions. 

\begin{definition}
The {\bf local density} of a row-stochastic matrix $W$ is 
$$
\theta(W) : = \sqrt{ \frac{1}{N} \sum\limits_{i,j \in [N]} w_{ij}^2 } = \frac{1}{\sqrt{N}} \| W \|_F.
$$
Moreover, we informally use the terminology ``locally dense'' to mean that $\theta(W)$ is a function of $N$ that tends to zero as $N \to \infty$. 
\end{definition}

\begin{remark}[Random walk matrices]
\label{remark:random_walk}
A special case of interest is when $W$ is a \emph{random walk matrix}; that is, if $G$ is an undirected graph on $[N]$ and $\mathrm{deg}(i)$ is the number of neighbors of $i$ in $G$, then $w_{ij} = 1/ \mathrm{deg}(i)$ if $(i,j)$ is an edge in $G$ and $w_{ij} = 0$ otherwise. In this special case, $\theta(W) = \sqrt{ \frac{1}{N} \sum_{i \in [N] } \frac{1}{\mathrm{deg}(i)} }$. 
In particular, $\theta(W)$ is small if all but a small fraction of nodes have high degree.
\end{remark}

Our next density measure depends on operator-theoretic properties of $W$. 

\begin{definition}
The {\bf spectral density} of a row-stochastic matrix $W$ is
$$
\lambda(W) : = \sup\limits_{x \in \reals^N \setminus \{0 \} : \mathbf{1}^\top x = 0 } \frac{ \| W x \|_2 }{ \| x \|_2}.
$$
Moreover, we informally use the terminology ``spectrally dense'' to mean that $\lambda(W)$ is a function of $N$ that tends to zero as $N \to \infty$.
\end{definition}

The terminology \emph{spectral density} comes from the observation that $\lambda(W)$ can be viewed as the spectral norm of $W$ restricted to the subspace of vectors $x \in \reals^N \setminus \{0 \}$ satisfying $\mathbf{1}^\top x = 0$. 

\begin{remark}[Complete graphs with link failures]
\label{remark:link_failures}
Let $G$ be the complete graph on $[N]$, and suppose that for each vertex $i$, there is a set of neighbors $F_i \subset [N]$ such that the edge $(i,j)$ is removed from $G$ for all $j \in N_i$ to form the graph $G'$ (this process models the failure of inter-agent links, for instance). To describe this more formally, let us define the matrices $W = \{ w_{ij} \}_{ij}$ and $H = \{ h_{ij} \}_{i,j}$, with entries given by 
\[
w_{ij} : = \begin{cases}
\frac{1}{N - 1 - | F_i|} & j \notin F_i \\
0 & j \in F_i \cup \{i \},
\end{cases} 
\hspace{0.5cm} 
\text{and} 
\hspace{0.5cm}
h_{ij}  := \begin{cases}
\frac{ |F_i| +1 }{N(N - 1 - |F_i|)} & j \notin F_i \\
-\frac{1}{N} & j \in F_i \cup \{i \}.
\end{cases}
\]
Notice that $W$ is the random walk matrix corresponding to $G'$, and can be written as $W = \mathbf{11}^\top / N + H$. Since $\lambda( \mathbf{11}^\top / N) = 0$, by the sub-additivity of $\lambda$ we can bound $\lambda(W)$ by the operator 2-norm of $H$, which is at most $\| H \|_F$. Through straightforward calculations, 
{if $\max_{i \in [N]} |F_i| = o(N)$ then}
$\| H \|_F^2 \le 2 ( \max_{i \in [N]} |F_i | + 1) / N = o(1)$.
{Hence $W$ is spectrally dense in this case.}
\end{remark}

\begin{remark}[Expander graphs]
\label{remark:expander}
Let $G = (V, E)$ be an undirected graph with vertex set $V = [N]$ and let $A$ be the (symmetric) adjacency matrix of $G$ with real eigenvalues $\mu_1, \ldots, \mu_N$ ordered from largest to smallest with respect to magnitude. Assuming that $G$ is $k$-regular, we have $\mu_1 = k$. Furthermore, if $W$ is the random walk matrix for $G$, then $W = A / k$ and it can be seen that $\lambda(W) = |\mu_2| / k$. If $|\mu_2|$ is much smaller than $k$, then $\lambda(W) \approx 0$, and $W$ can be thought of as a \emph{spectral perturbation} of the complete graph in light of Remark \ref{remark:spectral_density_homogenous}. Graphs for which $|\mu_2|$ is much smaller than $k$ are known as \emph{expanders}. Various families of random graphs, such as random regular graphs or Erd\H{o}s-R\'{e}nyi graphs, are known to be expanders \cite{erdos_renyi_eigenvalues1,regular_eigenvalues2,regular_eigenvalues1,friedman08}.
\end{remark}%
\begin{remark}[Extremal values of density measures]
\label{remark:spectral_density_homogenous}
When $W = I_N$ (the $N \times N$ identity matrix), $\lambda(W)$ and $\theta(W)$ attain a maximum value of 1. On the other hand, the choice $W = \frac{\mathbf{11}^\top}{N}$ attains the minimum of both, with $\lambda(W) = 0$ and $\theta(W) = \frac{1}{\sqrt{N}}$. 
\end{remark}%
\begin{remark}[Comparisons between density measures]
\label{remark:density_comparisons}
Since $\theta(W)$ depends only on pairwise interactions whereas $\lambda(W)$ utilizes the global structure of $W$, it is natural to expect that $\lambda(W)$ is a stronger measure of density. To see this more formally, let us suppose for simplicity that $W$ is a symmetric, irreducible matrix. All eigenvalues of $W$ are real, and suppose we write them as $\mu_1, \ldots, \mu_N$ so that $\mu_i^2 \ge \mu_j^2$ when $i \le j$. Since $W$ is row-stochastic, the Perron-Frobenius theorem implies that $\mathbf{1}$ is the unique eigenvector of $W$ with eigenvalue $\mu_1 = 1$. All other eigenvectors of $W$ are orthogonal to $\mathbf{1}$ by the symmetry of $W$, so {$\lambda(W)^2 = \mu_2^2$}. On the other hand, properties of the Frobenius norm imply that $\theta(W)^2 = \frac{1}{N} \sum_{i \in [N]} \mu_i^2 \le \mu_2^2 + 1/N = \lambda(W)^2 + 1/N$.
Hence, if $N$ is large and $\lambda(W)$ is small, $\theta(W)$ must be small as well.
\end{remark} 

\subsection{Robustness of the CMFA}
Our first main result establishes an analogue of Theorem \ref{thm:bw} that accounts for the spectral density of the interaction structure under identical agent behavior.

\begin{theorem}
\label{thm:deviation_rapidly_mixing}
Let $N$ be the size of the population and suppose that for all $i \in [N]$ and for all $\alpha, \beta \in \cS$, $r_i = 1$ and $\rho_i^{\alpha \beta} = \rho^{\alpha \beta}$. Let $Y_{av}(t)$ be the population average corresponding to the population process $\mathbf{Y}(t)$ with aggregation matrix $W$ and let $x(t)$ solve \eqref{eq:mean_field} with initial condition $x(0) = Y_{av}(0)$. There exist constants $a_1, b_1$ depending only on $\boldsymbol{\rho}, \phi, |\cS|$ such that for any time horizon $T > 0$ and $\epsilon > 0$,
\begin{equation}
\label{eq:rapidly_mixing_concentration}
\p \left( \sup\limits_{0 \le t \le T} \norm{ Y_{av}(t) - x(t)}_\infty > \revision{T} e^{a_1 T} \lambda(W) +  \epsilon \right) \le 2| \cS| \mathrm{exp} \left( -  \frac{N \epsilon^2}{e^{b_1 T}} \right).
\end{equation}
\end{theorem}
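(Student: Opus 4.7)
The plan is to combine a semimartingale decomposition of $Y_{av}$ with the key spectral estimate that $\lambda(W)$ controls the average deviation between the local estimates $\overline{Y}_i$ and the global average $Y_{av}$. This deviation will appear as an inhomogeneous forcing term when comparing the drift of $Y_{av}$ to the vector field $\phi$ of the CMFA. For each $i \in [N]$ and $\alpha \in \cS$, one writes
\[
Y_i^\alpha(t) = Y_i^\alpha(0) + \int_0^t \sum_{\beta \in \cS} \bigl( Y_i^\beta(s)\rho^{\beta\alpha}(\overline{Y}_i(s)) - Y_i^\alpha(s)\rho^{\alpha\beta}(\overline{Y}_i(s)) \bigr)\,ds + M_i^\alpha(t),
\]
where $M_i^\alpha$ is a purely discontinuous martingale whose jumps have magnitude at most $1$ and whose predictable quadratic variation on $[0,T]$ is $O(T)$. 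Averaging over $i$, adding and subtracting $\phi^\alpha(Y_{av}(s))$ inside the integrand, and subtracting the integral form of the ODE for $x$, yields
\[
Y_{av}^\alpha(t) - x^\alpha(t) = \int_0^t \bigl( \phi^\alpha(Y_{av}(s)) - \phi^\alpha(x(s)) \bigr)\,ds + \mathcal{R}^\alpha(t) + \bar{M}^\alpha(t),
\]
with $\bar{M}^\alpha(t) := \frac{1}{N}\sum_i M_i^\alpha(t)$ and $\mathcal{R}^\alpha(t)$ a remainder whose integrand contains terms of the form $\frac{1}{N}\sum_i Y_i^\beta(s)\bigl[\rho^{\beta\alpha}(\overline{Y}_i(s)) - \rho^{\beta\alpha}(Y_{av}(s))\bigr]$ (plus analogous outflux terms).

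The spectral estimate proceeds as follows. For each $\alpha$, let $v^\alpha(s) \in \reals^N$ be the vector with entries $Y_j^\alpha(s)$, so $\overline{v}^\alpha(s) = W v^\alpha(s)$. Since $W\mathbf{1} = \mathbf{1}$, one has $\overline{v}^\alpha(s) - Y_{av}^\alpha(s)\mathbf{1} = W\bigl(v^\alpha(s) - Y_{av}^\alpha(s)\mathbf{1}\bigr)$, and $v^\alpha(s) - Y_{av}^\alpha(s)\mathbf{1}$ is orthogonal to $\mathbf{1}$ with squared $\ell_2$-norm equal to $N Y_{av}^\alpha(s)(1 - Y_{av}^\alpha(s)) \le N/4$. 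Applying the definition of $\lambda(W)$ followed by Cauchy--Schwarz gives the deterministic, uniform-in-$s$ bound
\[
\frac{1}{N}\sum_{i \in [N]} \bigl| \overline{Y}_i^\alpha(s) - Y_{av}^\alpha(s) \bigr| \le \frac{1}{\sqrt{N}} \bigl\| W\bigl(v^\alpha(s) - Y_{av}^\alpha(s)\mathbf{1}\bigr) \bigr\|_2 \le \lambda(W)/2.
\]
Combining this with the Lipschitz property in Assumption~\ref{as:rho} controls the integrand defining $\mathcal{R}^\alpha$ pointwise, yielding $|\mathcal{R}^\alpha(t)| \le C_1 \lambda(W)\, t$ almost surely for a constant $C_1$ depending only on $L_\rho$ and $|\cS|$.

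Using the Lipschitz continuity of $\phi$ (with constant $C_2$ depending only on $\boldsymbol{\rho}$ and $|\cS|$), Gr\"onwall's inequality applied pathwise gives
\[
\sup_{0 \le t \le T} \| Y_{av}(t) - x(t) \|_\infty \le e^{C_2 T}\Bigl( C_1 \lambda(W)\, T + \sup_{0 \le t \le T} \|\bar{M}(t)\|_\infty \Bigr).
\]
Absorbing the polynomial-in-$T$ prefactor into the exponential produces the $e^{a_1 T}\lambda(W)$ term of \eqref{eq:rapidly_mixing_concentration} for a suitable $a_1$. To control the martingale, each $\bar{M}^\alpha$ is right-continuous with jumps bounded by $1/N$ and predictable quadratic variation of order $T/N$ on $[0,T]$; a continuous-time Freedman-type maximal inequality combined with a union bound over $\alpha \in \cS$ bounds $\sup_{0 \le t \le T} \|\bar{M}(t)\|_\infty$ by $\eta := e^{-C_2 T}\epsilon$ except on an event of probability at most $2|\cS|\exp(-N\epsilon^2 / (2 e^{b_1 T}))$, after absorbing polynomial-in-$T$ factors into the exponential. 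I expect the main obstacle to be this martingale step: one must apply Freedman's inequality with enough precision that the Gr\"onwall factor $e^{-C_2 T}$ on the martingale threshold and the polynomial $T/N$ bound on the quadratic variation combine to give the clean exponent $e^{b_1 T}$ demanded by the theorem. The spectral step is conceptually simple but needs the orthogonality of $v^\alpha - Y_{av}^\alpha\mathbf{1}$ to $\mathbf{1}$ to be exploited in tandem with the row-stochasticity of $W$, and this same identity foreshadows the more intricate construction needed for the NIMFA result in Theorem~\ref{thm:deviation}.
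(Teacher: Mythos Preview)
Your proposal is correct and follows essentially the same route as the paper: a semimartingale decomposition of $Y_{av}$, a spectral bound showing $\frac{1}{N}\sum_i |\overline{Y}_i^\alpha - Y_{av}^\alpha| \le C\lambda(W)$ via the orthogonality of $v^\alpha - Y_{av}^\alpha\mathbf{1}$ to $\mathbf{1}$ and row-stochasticity of $W$, a Lipschitz estimate on the drift remainder, Gr\"onwall, and a continuous-time Freedman inequality for $\bar{M}^\alpha$ with jumps of size $1/N$ and predictable quadratic variation $O(T/N)$. Your spectral step is in fact marginally sharper (you use the exact variance $NY_{av}^\alpha(1-Y_{av}^\alpha)\le N/4$ where the paper simply bounds by $N$), and you correctly track the factor of $t$ in the remainder $\mathcal{R}^\alpha(t)$, which gets absorbed into $e^{a_1 T}$ in either case.
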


Most significantly, Theorem \ref{thm:deviation_rapidly_mixing} holds for \emph{any} population process, whereas Theorem \ref{thm:bw} only holds in the case $W = \mathbf{11}^\top / N$. Moreover, the deviation between the stochastic population process and the CMFA is 
$O(\lambda(W) + 1/\sqrt{N})$. 
The proof of Theorem \ref{thm:deviation_rapidly_mixing} is essentially due to a perturbation argument. Recalling that $\overline{\mathbf{Y}}(t) = \{ \overline{Y}_i(t) \}_{i \in [N]}$ is the collection of local population estimates, we have for any $\alpha \in \cS$ that 
$ \overline{\mathbf{Y}}^\alpha(t) - Y_{av}^\alpha(t) \mathbf{1} = W( \mathbf{Y}^\alpha(t) - Y_{av}^\alpha(t) \mathbf{1} )$. Since $\mathbf{1}^\top ( \mathbf{Y}^\alpha(t) - Y_{av}^\alpha(t) \mathbf{1}) = 0$ and $| Y_i^\alpha(t) - Y_{av}^\alpha(t) | \le 1$, it holds that $\| \overline{Y}^\alpha(t) - Y_{av}^\alpha(t) \mathbf{1} \|_2^2 / N \le \lambda(W)^2$.
In other words,
the difference between $\overline{Y}_i(t)$ and $Y_{av}(t)$ is $O ( \lambda(W))$ on average. It follows that if $\lambda(W)$ is small, most of the local population estimates are \emph{approximately equal} to the true population average $Y_{av}(t)$. Hence the stochastic dynamics resemble the case of homogeneous interactions where the local population estimates are \emph{equal} to $Y_{av}(t)$, and Theorem \ref{thm:deviation_rapidly_mixing} follows. We provide a formal proof in Section \ref{subsec:expanders}. 

\begin{remark}
Let $\{W_N \}_N$ be a sequence of row-stochastic matrices such that $W_N \in \reals^{N \times N}$ and consider a sequence $\{ \mathbf{Y}^N(t) \}_{N}$ such that $\mathbf{Y}^N(t)$ is a stochastic population process with interaction matrix $W_N$. If $\lambda(W_N) \to 0$, then Theorem \ref{thm:deviation_rapidly_mixing} recovers a {concentration} inequality of a similar form as Theorem \ref{thm:bw}. The property $\lambda(W_N) \to 0$ can be satisfied for a variety of interaction structures, such as random regular graphs with degree increasing in $N$ or complete graphs with $o(N)$ link failures per node 
(see Remarks \ref{remark:link_failures} and \ref{remark:expander}).
\end{remark}

\begin{remark}
In light of Remark \ref{remark:spectral_density_homogenous}, Theorem \ref{thm:deviation_rapidly_mixing} may be viewed as a {\it robust} version of Theorem \ref{thm:bw} in the following sense. If the underlying topology $(G,W)$ is complete but is perturbed to $(G', W')$ (due to the failure or deletion of inter-agent links, for instance), the mean-field characterization of the population process still holds as long as $\lambda(W')$ is close to $\lambda(W) = 0$. 
\end{remark}

\begin{remark}
\label{remark:time_scaling}
Theorem \ref{thm:deviation_rapidly_mixing} can be readily extended to handle the case where $r \neq 1$. To see why, let $\mathbf{Y}(t)$ be the original stochastic process and let $\mathbf{Y}'(t)$ be a version of $\mathbf{Y}(t)$ where the only difference is that $r = 1$. Since agent updates occur according to a Poisson process, we have $\mathbf{Y}(t) \stackrel{d}{=} \mathbf{Y}'(rt)$. We may therefore derive concentration results for $\mathbf{Y}(t)$ by applying Theorem \ref{thm:deviation_rapidly_mixing} to $\mathbf{Y}'(t)$ and scaling the time horizon appropriately.
\end{remark}

\subsection{Approximation by the NIMFA}

Our second set of results shows that when $\theta(W)$ is small -- a much looser condition than being spectrally dense -- then the MFA \eqref{eq:mean_field} serves as a good approximation for the stochastic population process over bounded time intervals. Unlike Theorems \ref{thm:bw} and \ref{thm:deviation_rapidly_mixing}, we consider the most general setup where the $r_i$'s and the $\rho_i^{\alpha \beta}$'s can be different across $i \in [N]$. Our main result is the following. 

\begin{theorem}
\label{thm:deviation}
Suppose that $\mathbf{y}(t)$ solves $\dot{\mathbf{y}} = \Phi( \mathbf{y})$ with initial condition $\mathbf{y}(0) = \mathbf{Y}(0)$ and that $r_{max} : = \max_{i \in [N]} r_i < 1$. 
\revision{Let $\cR$ be a bound on the largest column sum of $W$.}
Then there exist constants $a_2, b_2, c_2$ depending only on $\boldsymbol{\rho}, | \cS |, r_{max}, \cR$ such that for all $T, \epsilon > 0$, 
\begin{align}
\p & \left( \sup\limits_{0 \le t \le T} \| Y_{av}(t) - y_{av}(t) \|_\infty > 10 \cR | \cS | e^T \theta(W) + \epsilon \hspace{-0.05cm} \right) \hspace{-0.05cm}  \le  h(T, \epsilon) \cdot \mathrm{exp} \left(  - \frac{N \epsilon^2}{ e^{a_2  T} }  \right) \nonumber 
\end{align}
where $h(T, \epsilon) : = (   | \log \epsilon | e^{ b_2  T}/\epsilon  )^{c_2 ( T + | \log \epsilon |)}$. 
\end{theorem}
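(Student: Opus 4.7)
The proof starts from the semimartingale decomposition $Y_i^\alpha(t) = Y_i^\alpha(0) + \int_0^t \Phi_i^\alpha(\mathbf{Y}(s))\,ds + M_i^\alpha(t)$ for each $i,\alpha$, where $M_i^\alpha$ is a pure-jump martingale. Subtracting the NIMFA identity and averaging over $i$ yields
\begin{equation*}
Y_{av}^\alpha(t) - y_{av}^\alpha(t) \;=\; \frac{1}{N}\sum_{i}M_i^\alpha(t) \;+\; \int_0^t \frac{1}{N}\sum_{i}\bigl[\Phi_i^\alpha(\mathbf{Y}(s)) - \Phi_i^\alpha(\mathbf{y}(s))\bigr]\,ds.
\end{equation*}
The martingale sum is the easier piece. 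Because the agents' Poisson clocks are independent, distinct $M_i^\alpha$'s a.s.\ never jump simultaneously, so their predictable co-variations vanish; the predictable quadratic variation of the average is therefore $O(t/N)$, and Freedman's inequality for cadlag martingales with jumps of size $O(1/N)$ yields $\p(\sup_{s\le T}|\tfrac{1}{N}\sum_i M_i^\alpha(s)| > \delta) \le 2\exp(-cN\delta^2/T)$.

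Controlling the drift is the hard part. Using $L_\rho$-Lipschitzness of $\rho_i^{\alpha\beta}$ together with the column-sum transpose identity $\frac{1}{N}\sum_i\|\overline{Y}_i(s)-\overline{y}_i(s)\|_1 \le \cR \cdot \frac{1}{N}\sum_j\|Y_j(s)-y_j(s)\|_1$, the drift is bounded by $C \cdot \frac{1}{N}\sum_j\|Y_j(s)-y_j(s)\|_1$ — which is, however, generically $O(1)$ rather than $O(1/\sqrt{N})$. A Grönwall closure on $\|Y_{av}-y_{av}\|_\infty$ alone is therefore impossible: this is exactly the ``non-autonomy'' obstruction flagged in the introduction, since the individual deviations $Y_i-y_i$ cannot be recovered from the population average.

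To bypass this, I would introduce an auxiliary process $\widetilde{\mathbf{Y}}(t)$ whose jump rates use the \emph{deterministic} NIMFA local estimates $\overline{y}_i(s)$ in place of the stochastic $\overline{Y}_i(s)$. Under $\widetilde{\mathbf{Y}}$ the agents evolve \emph{independently}, and since $y_i^\alpha(t)$ solves the marginal Kolmogorov equation for $\widetilde{Y}_i^\alpha$, one has $\E[\widetilde{Y}_i^\alpha(t)] = y_i^\alpha(t)$ exactly. Hoeffding on the weighted independent sum $\sum_j w_{ij}(\widetilde{Y}_j^\alpha-y_j^\alpha)$ then yields a per-$i$ tail bound at scale $\sqrt{\sum_j w_{ij}^2}$; averaging over $i$ produces the claimed $\theta(W)$ contribution. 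Coupling $\mathbf{Y}$ with $\widetilde{\mathbf{Y}}$ on a common Poisson driver, the discrepancy $|Y_i(t)-\widetilde{Y}_i(t)|$ is controlled by the integrated rate mismatch $\int_0^t\|\overline{Y}_i-\overline{y}_i\|_1\,ds$, which decomposes via the triangle inequality through $\widetilde{\overline{Y}}_i$ into (i) an independent-sum fluctuation around $\overline{y}_i$ (Hoeffding) and (ii) a self-referential term $\sum_j w_{ij}|Y_j-\widetilde{Y}_j|$ (Grönwall).

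The main obstacle is upgrading this heuristic to a bound with exponential-in-$N$ tail and the correct $\cR$-$T$ dependence: the Lipschitz step on the drift requires \emph{pointwise} (in $i$) control of $\overline{Y}_i-\overline{y}_i$, whereas the auxiliary-process concentration is naturally only averaged. I would close this gap by a bootstrap — discretize $[0,T]$ into $O(\cR T)$ subintervals on which the rates barely change, and on each subinterval iterate the coupling/concentration step $O(|\log\epsilon|)$ times, each refinement roughly halving the pointwise deviation. Union-bounding over the resulting $O(\cR T + |\log\epsilon|)$ bad events produces the prefactor $h(T,\epsilon,\cR)$, while the $e^{b_2\cR T}$ inside the exponent arises from accumulated Lipschitz Grönwall errors across subintervals. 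Designing this hierarchy so that the pointwise control genuinely closes without collapsing the $\exp(-N\epsilon^2)$ rate to a weaker polynomial tail is the core technical challenge.
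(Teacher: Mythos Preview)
Your opening diagnosis is correct: the semimartingale decomposition, the Freedman-type bound on $\frac{1}{N}\sum_i M_i^\alpha$, and the identification of the non-autonomy obstruction all match the paper. But your resolution of that obstruction diverges from the paper's, and the piece you flag as the ``core technical challenge'' is in fact a genuine gap.

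Your auxiliary independent process $\widetilde{\mathbf{Y}}$ and the coupling to $\mathbf{Y}$ naturally produce \emph{expectation} bounds: the probability that $Y_i$ and $\widetilde Y_i$ have decoupled by time $t$ is bounded by the integrated rate mismatch. But the events $\{Y_i\neq\widetilde Y_i\}$ are \emph{not} independent across $i$ (each depends on the full trajectory of $\overline{\mathbf Y}$), so there is no direct route from per-agent coupling probabilities to an $\exp(-N\epsilon^2)$ tail on $\frac{1}{N}\sum_i|Y_i-\widetilde Y_i|$. Your proposed bootstrap (``$O(\cR T)$ subintervals, $O(|\log\epsilon|)$ refinements, each halving the pointwise deviation'') does not explain what is being iterated or why anything halves: $\widetilde{\mathbf Y}$ is already built from the exact $\overline{y}_i$, so there is no Picard-type slack to contract, and the rates $\rho(\overline{Y}_i(s))$ vary on an $O(1)$ timescale regardless of how finely you partition $[0,T]$. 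As stated, the scheme does not close.

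The paper sidesteps coupling entirely. It introduces the family of weighted error functionals $A(P,t):=\frac{1}{N}\sum_i\|Y_{\mathbf p_i}(t)-y_{\mathbf p_i}(t)\|_\infty$ over sub-stochastic matrices $P$, and shows (Lemma~\ref{lemma:Gamma_properties}) that the drift of $A(P,\cdot)$ is controlled by $\sup_{Q\in\Gamma(P,T)}A(Q,\cdot)$, where $\Gamma(P,T)=\{W\}\cup\{\mathrm{Diag}(\mathbf r)\boldsymbol{\rho}^{\alpha\beta}(t)P\}$ consists of $W$ and diagonal contractions of $P$. Iterating this builds a set $\Gamma_\infty$ closed under $\Gamma(\cdot,T)$, so Gr\"onwall applies to $\sup_{P\in\Gamma_\infty}A(P,t)$ directly---this is how pointwise-vs-average is dissolved, by working with \emph{all} relevant linear projections simultaneously rather than seeking pointwise control of any one. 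The martingale term $\frac{1}{N}\sum_i\|M_{\mathbf p_i}(t)\|_\infty$ for fixed $P$ is handled by Freedman's inequality applied to the quadratic process $V^\alpha(t)=\frac{1}{N}\sum_i M_{\mathbf p_i}^{\alpha}(t)^2$ (Lemma~\ref{lemma:main_concentration}), which is where $\theta(W)$ enters. The prefactor $h(T,\epsilon,\cR)$ then comes from a covering-number bound on $\Gamma_\infty$ (Lemma~\ref{lemma:covering_number}) followed by a union bound---so the combinatorial factor arises from covering a deterministic matrix family, not from a time-discretized bootstrap.
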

Theorem \ref{thm:deviation} can be seen as a generalization of Theorem \ref{thm:bw} in the sense that we obtain a similar non-asymptotic large-deviations probability bound for the setting of general agent behavior and heterogeneous interactions, although the constants involved are different. Importantly, Theorem \ref{thm:deviation} holds for any interaction structure $W$ and the approximation error of the NIMFA is $O( \theta(W) + 1/\sqrt{N})$. The proof of Theorem \ref{thm:deviation} is quite involved, and we defer the details to Section \ref{subsec:concentration_general}. At a high level, we study martingales associated to the process $Y_{av}(t) - y_{av}(t)$ and apply standard concentration inequalities to obtain an exponentially decaying probability bound. The devil is in the details, however, and our analysis requires a careful and tight control of the dependencies between agent behaviors. 

\begin{remark}[Random walk matrices]
Suppose that $W$ is a random walk matrix as described in Remark \ref{remark:random_walk}. We may bound the maximum column sum as 
$$
\max\limits_{j \in [N]} \sum\limits_{i \in [N]} w_{ij}  = \max\limits_{j \in [N]} \sum\limits_{i \in [N]} \frac{ \mathbf{1}( w_{ij} > 0) }{ \mathrm{deg}(i)} \le \frac{d_{max}}{d_{min}} = : \cR,
$$
where $d_{max}$ is the maximum degree and $d_{min}$ is the minimum degree over all agents. 
Consequently, if we consider a sequence $\{\mathbf{Y}^N(t) \}_N$ of stochastic population processes where $\mathbf{Y}^N(t)$ has interaction matrix $W_N$, $\theta(W_N) \to 0$ and the maximum column sum of the $W_N$'s are uniformly bounded by some fixed $\cR$, then Theorem \ref{thm:deviation} recovers a concentration inequality of a similar form as Theorem \ref{thm:bw} for large $N$. 
\end{remark}

\begin{remark}
\label{remark:deviation}
The condition $r_{max} : = \max_{i \in [N]} r_i < 1$ is made for convenience, and Theorem \ref{thm:deviation} can be readily extended to handle the case of general $r_{max}$. To see why, let $\mathbf{Y}'(t)$ be a version of the original process with interaction rate for agent $i$ given by $r_i' : = r_i / (2 r_{max})$ so that $r'_i \le 1/2 < 1$ for $\mathbf{Y}'(t)$. Following the same arguments of Remark \ref{remark:time_scaling}, $\mathbf{Y}(t) \stackrel{d}{=} \mathbf{Y}'(2 r_{max} t)$, hence concentration for $Y_{av}(t)$ can be obtained by applying Theorem \ref{thm:deviation} to $\mathbf{Y}'(t)$ and scaling the time horizon appropriately.
\end{remark}

\begin{remark}[Comparison to \cite{horvath2022accuracy}]
\label{remark:comparison}
After an initial draft of our work was posted online \cite{sridhar_kar_arxiv_2021}, we came across a recent work by Horv\'{a}th and Keliger that proved a version of Theorem \ref{thm:deviation} using different techniques which holds for higher-order interactions (i.e., defined by a hypergraph); see \cite[Theorem 4]{horvath2022accuracy}. Their probability bound is considerably weaker then the exponential bound we provide in Theorem \ref{thm:deviation}. Since our methods are quite generic, we conjecture that our exponential bounds may translate to the setting of higher-order interactions, though we leave this to future work.
\end{remark}

\section{Choosing the right mean-field approximation}
\label{sec:choosing}
 Although our results show that the NIMFA is generally more accurate than the CMFA, simulating the NIMFA can be computationally intractable for large populations. A natural question, therefore, is whether our approximation results are \emph{tight}: are there situations where $W$ is not spectrally dense, yet the CMFA provides a good approximation for $Y_{av}(t)$? Our simulations indicate that if agents choose their initial states independently, the CMFA serves as a good approximation to the population process, even for interaction structures that are not spectrally dense. On the other hand, we provide examples of more structured initial conditions, depending on $W$, for which $Y_{av}(t)$ and the NIMFA deviate significantly from the CMFA, both qualitatively and quantitatively. 

\subsection{Example: Coordination games on nearest-neighbor graphs}
\label{subsec:simulations_setup}

Consider the 2-action population game $\cU(x_1, x_2) = (\cU^1(x_1, x_2), \cU^2(x_1, x_2)) = (x_1, 2x_2)$, which is sometimes known as a \emph{coordination game} \cite[Chapter 2]{sandholm2010popgames}. We assume for simplicity that agent behavior is homogeneous (i.e., $r_i = 1$ and $\rho_i^{\alpha \beta} = \rho^{\alpha \beta}$ for all $i \in [N]$) and that agents evolve via a \emph{logit choice protocol} \cite{sandholm2010popgames}, defined for $\alpha, \beta \in \{1,2 \}$ by $\rho^{\alpha \beta} (x) := \mathrm{exp} ( \eta^{-1} \cU^\beta(x) ) / ( \mathrm{exp} ( \eta^{-1} \cU^1(x)) + \mathrm{exp} ( \eta^{-1} \cU^2(x)) )$ with noise level $\eta = 0.1$. We assume that the interaction structure between agents is induced by a \emph{nearest neighbor graph}, the construction of which is outlined below. 

\begin{definition}[Nearest neighbor graphs]
We say $G$ is a nearest neighbor graph with $N$ nodes and density $\gamma \in [0,1]$ if $G$ is constructed as follows: first, $N$ vertices are placed at equidistant locations on the unit circle. Then, letting $d$ be the closest even number to $\gamma N$, each vertex forms an edge with the $d$ closest neighbors in $G$. 
\end{definition}

We assume that the aggregation matrix $W_{N, \gamma}$ is the random walk matrix corresponding to a nearest neighbor graph on $N$ vertices and density $\gamma$. 
It turns out that $W_{N, \gamma}$ is {\it not} spectrally sparse: it holds that $\lim_{N \to \infty} \lambda(W_{N, \gamma}) = \sin(\pi \gamma) / (\pi \gamma) > 0$. This
essentially follows from the observation that $W_{N, \gamma}$ is a circulant matrix,
hence the eigenvalues can be explicitly computed. As the calculations are quite elementary, we leave the details to the interested reader due to space constraints.
Since $\lambda(W_{N, \gamma})$ is bounded away from zero, Theorem \ref{thm:deviation_rapidly_mixing} does not provide any meaningful results about the concentration of $Y_{av}(t)$ around the CMFA. However, since $W_{N, \gamma}$ corresponds to a regular graph, Remark \ref{remark:random_walk} implies that $\theta(W) = \sqrt{1/d} = ( \gamma N)^{-1/2}$. Theorem \ref{thm:deviation} therefore implies that $Y_{av}(t)$ is well-approximated by the NIMFA. 

In our simulations, we consider two types of initial conditions, which we refer to as {\it clustered initial conditions} and {\it random initial conditions}. In both, we let 80\% of agents initially play action 1, and 20\% of agents initially play action 2, but the {\it locations} of these agents differ in the two types of initial conditions.
When simulating {\it random} initial conditions, each agent independently chooses action 1 initially with probability $0.8$, otherwise action 2 is chosen. Since neighborhoods are large, the law of large numbers implies that each neighborhood should approximately have an 80\% fraction of vertices playing action 1 and a 20\% fraction of vertices playing action 2, which closely aligns with the true population average. We therefore expect that the CMFA will be quite accurate for random initial conditions, at least in the early stages of evolution.
On the other hand, the idea behind clustered initial conditions is to provide an example where the local population estimates deviate significantly from the average, which would imply that the CMFA may be inaccurate. Specifically, the agents who play action 2 are chosen to be consecutive vertices on the unit circle -- hence the name {\it clustered initial conditions.} As a result, vertices in this ``cluster'' will have a high neighborhood fraction of vertices playing action 2, whereas vertices far from the cluster will have a high neighborhood fraction of vertices playing action 1. The local population states can vary significantly from the average, at least in the early stages of the evolution, which will cause significant deviations from the CMFA.

\subsection{Simulation results}

We simulated the population process induced by $r_i = 1$, $\boldsymbol{\rho}$ given by the logit choice dynamics with noise level $\eta = 0.1$, and $W = W_{N, \gamma}$ with $N = 1000$ and various values of $\gamma$.

\paragraph{Random initial conditions} We simulated $Y_{av}(t)$ for $\gamma = 0.2, 0.5, 0.8$ with random initial conditions and compared their trajectories to the CMFA \revision{and the NIMFA for $\gamma = 0.2$}; see Figure \ref{subfig:random}. Notably, even though $W$ is not spectrally dense, the CMFA captures the evolution of $Y_{av}(t)$ \revision{as well as the NIMFA}. Somewhat surprisingly, the CMFA \revision{accurately approximates both the NIMFA and $Y_{av}(t)$} over longer time periods, suggesting that the CMFA is a {\it stable trajectory} of the NIMFA.

\paragraph{Clustered initial conditions} We simulated the resulting population process for \revision{$\gamma = 0.2$}, the corresponding NIMFAs, as well as the CMFA; our results can be found in Figure \ref{fig:structured_initial_conditions}. As predicted by Theorem \ref{thm:deviation}, $Y_{av}(t)$ is well-approximated by the corresponding NIMFA. Moreover, 
since the empirical distribution of many neighborhoods deviate significantly from the population average for these initial conditions,
the NIMFA and the CMFA {can} deviate significantly {in transient stages, a illustrated by Figure \ref{subfig:structured_0.2}.}
{Interestingly, the NIMFA and CMFA also have different steady-state behavior under clustered initial conditions (see Figure \ref{subfig:asymptotics}), showing that the topological structure of initial conditions can have a considerable long-term effect on the system.}
A deeper understanding of {such fundamental} differences between the NIMFA and the CMFA is an important avenue for future work. 

\begin{figure}[t]
    \centering
    \begin{subfigure}{0.33\textwidth}
        \centering
        \includegraphics[width=\textwidth]{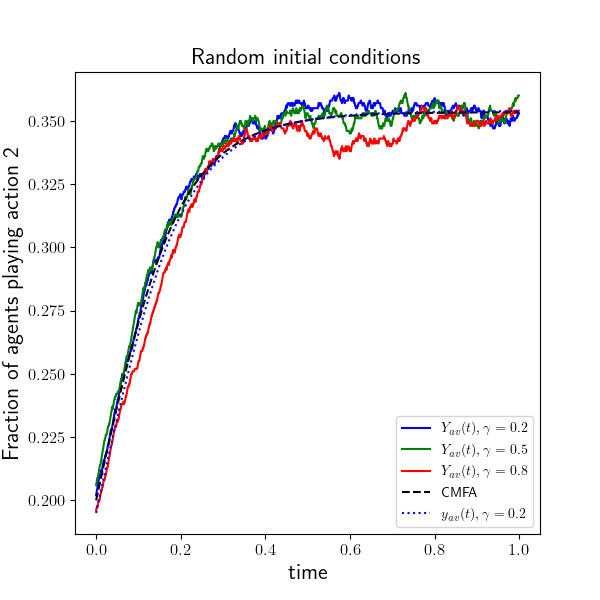}
        \caption{Random initial conditions}
        \label{subfig:random}
    \end{subfigure}%
    \begin{subfigure}{0.33 \textwidth}
        \centering
        \includegraphics[width=\textwidth]{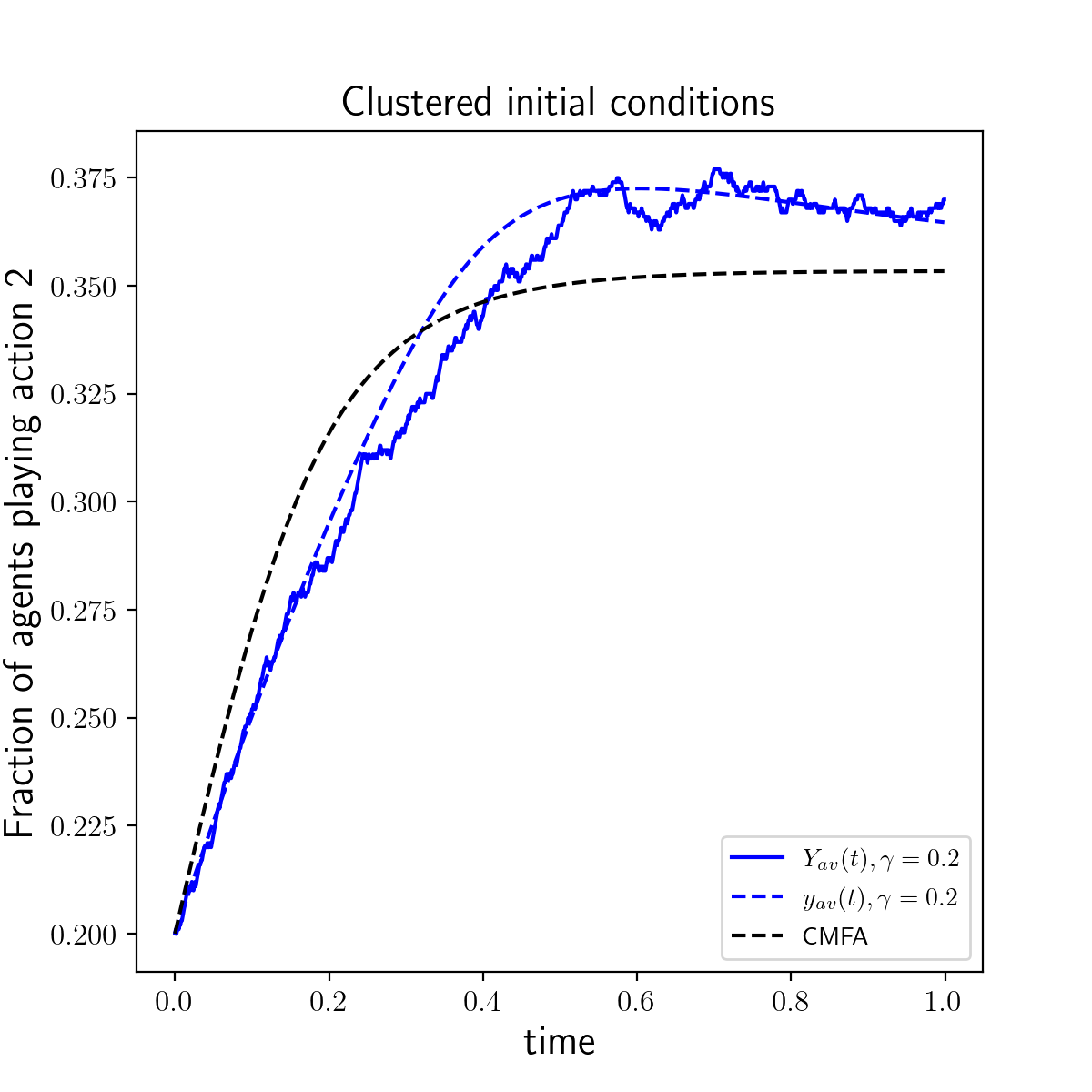}
        \caption{{$\gamma = 0.2$}}
        \label{subfig:structured_0.2}
    \end{subfigure}%
    \begin{subfigure}{0.33\textwidth}
        \centering
        \includegraphics[width=\textwidth]{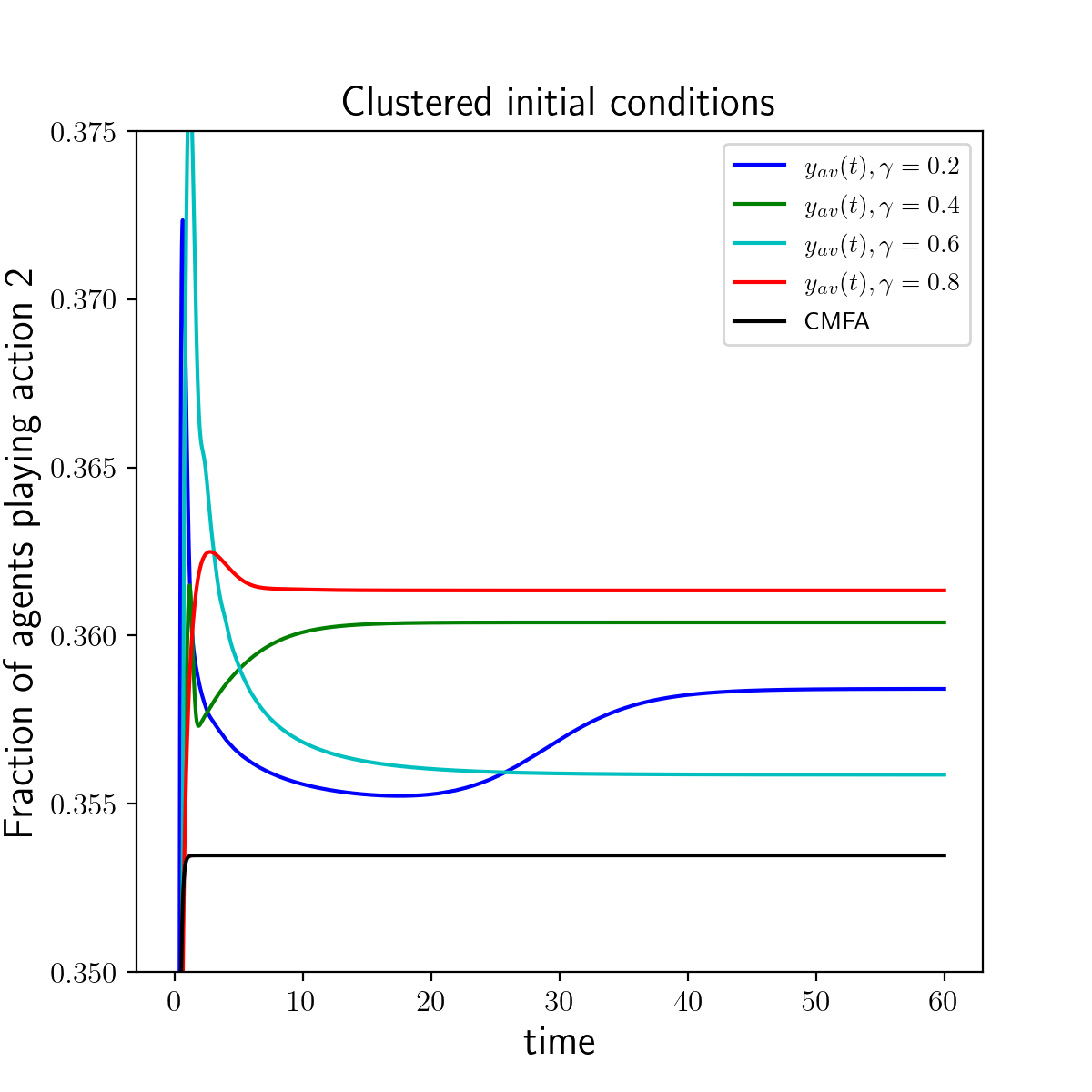}
        \caption{{Asymptotics of the NIMFA}}
        \label{subfig:asymptotics}
    \end{subfigure}%
    \caption{Simulations of $Y_{av}(t)$, the CMFA and the NIMFA. Fig. \ref{subfig:random} illustrates random initial conditions for $\gamma \in \{ 0.2, 0.5, 0.8\}$. 
    {Fig. \ref{subfig:structured_0.2} illustrates the impact of clustered initial conditions on $Y_{av}(t)$ and the NIMFA.}
    {Fig. \ref{subfig:asymptotics} shows the temporal asymptotics of the NIMFA with clustered initial conditions with $N=200$. We take a smaller value of $N$ for Fig. \ref{subfig:asymptotics} due to the computational overhead in solving the NIMFA over large time horizons.}}
    \label{fig:structured_initial_conditions}
\end{figure}%

\section{Proofs of mean-field concentration results}

In this section, we prove Theorems \ref{thm:deviation_rapidly_mixing} and \ref{thm:deviation}. While the high-level strategy follows the methods of Bena\"{i}m and Weibull \cite{BenaimWeibull}, there are many subtle differences in the details caused by the heterogeneities induced by $W$ which require novel technical workarounds. In Section \ref{subsec:expanders}, we prove Theorem \ref{thm:deviation_rapidly_mixing}, which also serves as a gentle introduction to our methods. Finally, we prove our more general result, Theorem \ref{thm:deviation}, in Section \ref{subsec:concentration_general}. 

Before moving to our analysis, we introduce some notation. For a vector $v \in \reals_{\ge 0}^N$, define $Y_v(t) : = \sum_{i \in [N]} v_i Y_i(t)$, $y_v(t) : = \sum_{i \in [N]} v_i y_i(t)$, $\Phi_v(\cdot) : = \sum_{i \in [N]} v_i \Phi_i(\cdot)$ and $M_v(t) : = Y_v(t) - Y_v(0) - \int_0^t \Phi_v(\mathbf{Y}(s)) ds$. In the special case $v = \mathbf{1}/N$, we use the subscript ``$av$" (i.e., $Y_{av}(t)$) to indicate that we are taking an average over all agents. 

\subsection{Concentration for spectrally dense interactions: Proof of Theorem \ref{thm:deviation_rapidly_mixing}}
\label{subsec:expanders}

In this section we prove Theorem \ref{thm:deviation_rapidly_mixing}, which follows from a few intermediate results. Recall that for the purposes of this section alone, we assume that $r_i = 1$ and $\rho_i^{\alpha \beta} = \rho^{\alpha \beta}$ for all $i \in [N]$ and $\alpha, \beta \in \cS$. Our first intermediate result shows that the expected rate of change for $Y_{av}(t)$ is approximately equal to the rate of change for the classical mean-field ODE. 

\begin{lemma}
\label{lemma:expander_perturbation_bound}
For any $t \ge 0$, 
$\| \Phi_{av}(\mathbf{Y}(t)) - \phi( Y_{av}(t) ) \|_\infty \le 2L_\rho | \cS |^{3/2} \lambda(W)$.
\end{lemma}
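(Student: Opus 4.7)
The plan is to directly expand $\Phi_{av}(\mathbf{Y}(t)) - \phi(Y_{av}(t))$ coordinate-by-coordinate, exploit the Lipschitz property of $\rho^{\alpha\beta}$ to reduce the bound to an average deviation of local population estimates from the true average, and then invoke the spectral-density estimate for $\lambda(W)$ flagged in the paragraph just before the lemma.

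More concretely, fix $\alpha \in \cS$ and a time $t$. Since $r_i=1$ and $\rho_i^{\alpha\beta}=\rho^{\alpha\beta}$, we have
\[
\Phi_{av}^\alpha(\mathbf{Y}(t)) = \frac{1}{N}\sum_{i\in[N]}\sum_{\beta\in\cS}\bigl(Y_i^\beta \rho^{\beta\alpha}(\overline{Y}_i) - Y_i^\alpha \rho^{\alpha\beta}(\overline{Y}_i)\bigr).
\]
Using $Y_{av}^\beta = \tfrac{1}{N}\sum_i Y_i^\beta$, I would rewrite $\phi^\alpha(Y_{av})$ in the same averaged form,
\[
\phi^\alpha(Y_{av}) = \frac{1}{N}\sum_{i\in[N]}\sum_{\beta\in\cS}\bigl(Y_i^\beta \rho^{\beta\alpha}(Y_{av}) - Y_i^\alpha \rho^{\alpha\beta}(Y_{av})\bigr),
\]
so that the difference becomes a single average over $i$ in which each summand involves $\rho^{\gamma\delta}(\overline{Y}_i)-\rho^{\gamma\delta}(Y_{av})$. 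Since $Y_i^\beta,Y_i^\alpha\in\{0,1\}$ and $\rho^{\gamma\delta}$ is $L_\rho$-Lipschitz on $\Delta(\cS)$, the triangle inequality yields
\[
\bigl|\Phi_{av}^\alpha(\mathbf{Y}(t)) - \phi^\alpha(Y_{av}(t))\bigr| \le 2 L_\rho |\cS|\cdot\frac{1}{N}\sum_{i\in[N]}\|\overline{Y}_i(t) - Y_{av}(t)\|_2.
\]

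From here, the final step is to convert this sum to something controlled by $\lambda(W)$. The paragraph preceding the lemma established that, for every coordinate $\alpha$,
\[
\frac{1}{N}\bigl\|\overline{Y}^\alpha(t) - Y_{av}^\alpha(t)\mathbf{1}\bigr\|_2^2 \le \lambda(W)^2,
\]
using $\mathbf{1}^\top(\mathbf{Y}^\alpha - Y_{av}^\alpha\mathbf{1})=0$ and $|Y_i^\alpha - Y_{av}^\alpha|\le 1$. Applying Jensen's inequality to pull the square-root outside the per-agent average and then summing the squared coordinate-wise bounds over $\alpha \in \cS$ gives
\[
\frac{1}{N}\sum_{i\in[N]}\|\overline{Y}_i(t) - Y_{av}(t)\|_2 \le \sqrt{\frac{1}{N}\sum_{i,\alpha}(\overline{Y}_i^\alpha - Y_{av}^\alpha)^2} \le \sqrt{|\cS|}\,\lambda(W).
\]
Plugging this back produces the claimed bound $\|\Phi_{av}(\mathbf{Y}(t)) - \phi(Y_{av}(t))\|_\infty \le 2 L_\rho |\cS|^{3/2}\lambda(W)$, uniformly in $\alpha$.

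The computation is essentially routine once the right reformulation of $\phi^\alpha(Y_{av})$ as a per-agent average is made; the only mild subtlety is the two norm conversions (agent-level $\ell_2$ via Jensen, followed by coordinate-level summation producing the $\sqrt{|\cS|}$ factor), which together account for the $|\cS|^{3/2}$ in the stated constant. No further probabilistic input is needed since the bound is deterministic at fixed $t$.
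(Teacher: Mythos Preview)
Your proposal is correct and follows essentially the same route as the paper's own proof: both expand $\Phi_{av}^\alpha - \phi^\alpha$ as a per-agent average of terms $\rho^{\gamma\delta}(\overline{Y}_i) - \rho^{\gamma\delta}(Y_{av})$, apply the Lipschitz bound, use Jensen to pass from the per-agent average to an $\ell_2$-type quantity, and finish with the spectral-density estimate $\|\overline{\mathbf{Y}}^\alpha - Y_{av}^\alpha\mathbf{1}\|_2^2/N \le \lambda(W)^2$. The only cosmetic difference is that the paper first bounds via $\|\cdot\|_\infty$ and then converts to $\|\cdot\|_2$, whereas you go to $\|\cdot\|_2$ directly; this is harmless since $\|\cdot\|_\infty \le \|\cdot\|_2$.
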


\begin{proof}
For brevity, define $f_i^{\alpha \beta}(t) : = \rho^{\alpha \beta} ( \overline{Y}_i(t)) - \rho^{\alpha \beta} ( Y_{av}(t))$. For any $\alpha$, we can write $\Phi_{av}^\alpha( \mathbf{Y}(t)) - \phi^\alpha ( Y_{av}(t)) = \frac{1}{N} \sum_{i \in [N]} \sum_{\beta \in \cS} ( Y_i^\beta(t) f_i^{\beta \alpha}(t) - Y_i^\alpha(t) f_i^{\alpha \beta}(t) )$. The absolute value of $\Phi_{av}^\alpha(\mathbf{Y}(t)) - \phi^\alpha(Y_{av}(t))$ can then be bounded as
\begin{align*}
& | \Phi_{av}^\alpha(t) - \phi^\alpha( Y_{av}(t)) |  \stackrel{(a)}{\le} \frac{1}{N} \sum\limits_{i = 1}^N \sum\limits_{\beta \in \cS} \left( |f_i^{\beta \alpha}(t) | + | f_i^{\alpha \beta}(t) | \right) \\
&  \stackrel{(b)}{\le} \frac{2L_\rho | \cS |}{N} \sum\limits_{i = 1}^N  \| \overline{Y}_i(t) - Y_{av}(t) \|_\infty  \stackrel{(c)}{\le} \frac{2L_\rho | \cS |}{N} \sum\limits_{i = 1}^N \sqrt{ \sum\limits_{\beta \in \cS} \left( \overline{Y}_i^\beta(t) - Y_{av}^\beta(t) \right)^2 } \\
& \stackrel{(d)}{\le} 2L_\rho | \cS |\sqrt{ \frac{1}{N} \sum\limits_{i = 1}^N \sum\limits_{\beta \in \cS} \left( \overline{Y}_i^\beta(t) - Y_{av}^\beta(t) \right)^2 }  = 2L_\rho | \cS | \sqrt{ \frac{1}{N} \sum\limits_{\beta \in \cS} \left \| W ( \mathbf{Y}^\beta(t) - Y_{av}^\beta(t) \mathbf{1} ) \right \|_2^2 } \\
& \le 2L_\rho | \cS | \sqrt{ \frac{1}{N} \sum\limits_{\beta \in \cS} \lambda(W)^2 \| \mathbf{Y}^\beta(t) - Y_{av}^\beta(t) \mathbf{1} \|_2^2  }  \stackrel{(e)}{\le} 2L_\rho | \cS |^{3/2} \lambda(W).
\end{align*}
Above, $(a)$ uses the triangle inequality and that $| Y_i^\alpha(t)  | \le 1$, $(b)$ uses the bound $|f_i^{\alpha \beta} (t) | \le L_\rho \| \overline{Y}_i(t) - Y_{av}(t) \|_\infty$ which holds since all the $\rho^{\alpha \beta}$'s are $L_\rho$-Lipschitz (see Assumption \ref{as:rho}), $(c)$ uses $\| \cdot \|_\infty \le \| \cdot \|_2$, $(d)$ is due to Jensen's inequality and $(e)$ holds since $| Y_i^\beta(t) - Y_{av}^\beta(t) | \le 1$, implying that $\| \mathbf{Y}^\beta(t) - Y_{av}^\beta(t) \mathbf{1} \|_2^2 \le N$. 
\end{proof}

\revision{Next, we study the tail of  $M_{av}(t) := Y_{av}(t) - Y_{av}(0) - \int_0^t \Phi_{av}( \mathbf{Y}(s)) ds$.}

\begin{lemma}
\label{lemma:Mav_concentration}
Let $N, T$ be given. For any $\epsilon \in (0,1)$, 
$$
\p \left( \sup\limits_{0 \le t \le T} \| M_{av}(t) \|_\infty > \epsilon \right) \le 2 | \cS | \mathrm{exp} \left( - \frac{N \epsilon^2}{4 e^T} \right).
$$
\end{lemma}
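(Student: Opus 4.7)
The plan is to exploit the fact that each coordinate $M^\alpha_{av}$ of the compensated process is a pure-jump martingale with deterministically bounded jumps, and then apply a Chernoff–Doob exponential martingale argument. A union bound over $\alpha \in \cS$ and the two signs $\pm$ accounts for the prefactor $2|\cS|$; it therefore suffices to establish a one-sided bound $\p(\sup_{0 \le t \le T} M^\alpha_{av}(t) > \epsilon) \le \exp(-N\epsilon^2/(2e^T))$ for each fixed $\alpha$.

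First, I would verify via Dynkin's formula that $M^\alpha_{av}$ is indeed a martingale: since the generator of the stochastic population process evaluated on the coordinate function $Y^\alpha_{av}$ equals $\Phi^\alpha_{av}(\mathbf{Y})$ (this is precisely how $\Phi$ is defined in \eqref{eq:mean_field}), subtracting its time integral from $Y^\alpha_{av}(t)-Y^\alpha_{av}(0)$ is the standard compensation. Next, I would record two \emph{pathwise} boundedness properties driving the concentration: (i) every jump of $M^\alpha_{av}$ has magnitude at most $1/N$, because a single agent's state transition changes $Y^\alpha_{av}$ by exactly $\pm 1/N$ or $0$; and (ii) the total rate of potential jumps is bounded by $\sum_{i \in [N]} r_i = N$.

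Then I would construct the Dol\'eans–Dade exponential $Z^\theta(t) := \exp\bigl(\theta M^\alpha_{av}(t) - A^\theta(t)\bigr)$, where $A^\theta(t)$ is the predictable compensator of $\exp(\theta M^\alpha_{av})$ obtained from the Markov generator. Using the elementary inequality $e^x - 1 - x \le \tfrac{x^2}{2} e^{|x|}$ together with (i)–(ii), one obtains a deterministic bound of the form $A^\theta(T) \le \theta^2 T\, e^{|\theta|/N}/(2N)$. Since $\exp(\theta M^\alpha_{av})$ is a nonnegative submartingale, Doob's maximal inequality yields
\[
\p\bigl(\sup\nolimits_{0 \le t \le T} M^\alpha_{av}(t) > \epsilon\bigr) \le e^{-\theta\epsilon}\, \E[\exp(\theta M^\alpha_{av}(T))] \le \exp\bigl(-\theta\epsilon + A^\theta(T)\bigr).
\]
Choosing $\theta$ of order $N\epsilon/T$ (small enough that $\theta/N$ is bounded) and using the crude inequality $T \le e^T$ to absorb the constants then produces the target bound; the mirror argument handles the lower tail.

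The step requiring the most care is the generator computation establishing that $Z^\theta$ is a supermartingale: the jumps of all $N$ agents occur simultaneously in the same process and the rates depend on the correlated local estimates $\overline{Y}_i$, so one must be careful about superposition. However, this is not a genuine obstacle, because both the jump-size bound $1/N$ and the total-rate bound $N$ hold \emph{pathwise} and independently of $\overline{Y}_i$, so the correlations between agents never enter the estimate of $A^\theta$. The remaining work is a routine Chernoff optimization in $\theta$ to land on the precise constant in the exponent.
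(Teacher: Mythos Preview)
Your proposal is correct and follows essentially the same route as the paper: both arguments fix $\alpha$, use the pathwise jump bound $1/N$ together with the total jump rate $N$ (equivalently, the predictable variation bound $\langle M_{av}^\alpha\rangle_T\le T/N$), derive a one-sided exponential tail, and finish with a union bound over $\alpha$ and sign. The only cosmetic difference is that the paper invokes the continuous-time Freedman inequality as a black box, whereas you spell out the underlying Chernoff--Doob exponential-martingale computation; once you optimize in $\theta$ you recover exactly the Freedman bound $\exp\bigl(-N\epsilon^2/(2(\epsilon+T))\bigr)$, and the replacement $\epsilon+T\le 1+T\le e^T$ (using $\epsilon<1$) gives the stated constant.
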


Bena\"{i}m and Weibull proved Lemma \ref{lemma:Mav_concentration} for the special case $W = \mathbf{11}^\top / N$, but their proof in \cite{BenaimWeibull} crucially used the Markovianity of $Y_{av}(t)$ in this case. However, $Y_{av}(t)$ is not Markov in general (only the full process $\mathbf{Y}(t)$ is Markov).
Nevertheless, standard martingale concentration inequalities can be used to prove Lemma \ref{lemma:Mav_concentration} for general row-stochastic $W$; we defer the proof to Appendix \ref{sec:martingale_properties} for the interested reader.

\begin{proof}[Proof of Theorem \ref{thm:deviation_rapidly_mixing}]
Let $0 \le t \le T$. 
\revision{Using the relation $Y_{av}(t) - x(t) = Y_{av}(t) - Y_{av}(0) - \int_0^t \phi(x(s)) ds$, we can add and subtract terms to write
\[
Y_{av}(t) - x(t) = \int_0^t \phi( Y_{av}(s)) - \phi( x(s)) + ( \Phi_{av}(\mathbf{Y}(s)) - \phi(Y_{av}(s)) ) ds + M_{av}(t).
\]
We proceed by taking the infinity norm on both sides. Since $\phi$ is $L_\phi$-Lipschitz, we have that $\| \phi( Y_{av}(s) ) - \phi(x(s)) \|_\infty \le L_\phi \| Y_{av}(s) - x(s) \|_\infty$. 
Using Lemma \ref{lemma:expander_perturbation_bound}, we have that $\| \Phi_{av}( \mathbf{Y}(s)) - \phi(Y_{av}(s)) \|_\infty \le L \lambda (W)$ with $L : = 2 L_\rho |\cS|^{3/2}$.
Together, these inequalities imply that
\begin{align}
\label{eq:yav_error_bound}
\| Y_{av}(t) - x(t) \|_\infty & \le L_\phi \int_0^t \| Y_{av}(s) - x(s) \|_\infty + LT \lambda(W) + \sup\limits_{0 \le t \le T} \| M_{av}(t) \|_\infty.
\end{align}
Applying Gr\"{o}nwall's inequality to \eqref{eq:yav_error_bound} yields
\[
\sup\limits_{0 \le t \le T} \| Y_{av}(t) - x(t) \|_\infty \le \left( L T \lambda(W) + \sup\limits_{0 \le t \le T} \| M_{av}(t) \|_\infty \right) e^{ L_\phi T}.
\]
Rearranging terms, we have that 
\[
\sup_{0 \le t \le T} \| M_{av}(t) \|_\infty \ge e^{- L_\phi T} \left( \sup_{0 \le t \le T} \| Y_{av}(t) - x(t) \|_\infty \right) - L T \lambda(W).
\]
In particular, if $\sup_{0 \le t \le T} \| Y_{av}(t) - x(t) \|_\infty \ge L T e^{L_\phi T} \lambda(W) + \epsilon$, then we have that $\sup_{0 \le t \le T} \| M_{av}(t) \|_\infty \ge \epsilon / e^{L_\phi T}$. We conclude by using Lemma \ref{lemma:Mav_concentration} to bound the probability of the latter event.}
\end{proof}

\subsection{Concentration for general processes: Proof of Theorem \ref{thm:deviation}}
\label{subsec:concentration_general}

In this section we prove Theorem \ref{thm:deviation} under general conditions (i.e., general $r_i, \rho_i, W$). While the general proof strategy is the same as that of Theorem \ref{thm:deviation_rapidly_mixing}, many of the intermediate steps no longer hold in the general setting, which requires us to develop new techniques to prove concentration.

Define, for a non-negative matrix $P$, the process 
$A(P, t) : =  \sum_{i \in [N]} \| Y_{\mathbf{p}_i}(t) - y_{\mathbf{p}_i}(t) \|_\infty / N$,
where $\mathbf{p}_i$ is the $i$th row vector of $P$. Notice that by setting $P = \mathbf{11}^\top / N$ we recover the error process of interest. An important observation is that $A(P, t)$ captures the average behavior of a collection of \emph{linear projections} of the error process $\mathbf{Y}(t) - \mathbf{y}(t)$ onto $\reals$.

We proceed by following the proof of Theorem \ref{thm:deviation_rapidly_mixing}, with the goal of bounding $A(P,t)$ for a given sub-stochastic matrix $P$. To this end, we can write
$$
Y_{\mathbf{p}_i}(t) - y_{\mathbf{p}_i}(t) = \int_0^t \Phi_{\mathbf{p}_i}( \mathbf{Y}(s) ) - \Phi_{\mathbf{p}_i}( \mathbf{y}(s)) ds + M_{\mathbf{p}_i}(t).
$$
Taking the infinity norm of both sides and applying the triangle inequality shows that
$$
\| Y_{\mathbf{p}_i}(t) - y_{\mathbf{p}_i}(t) \|_\infty \le \int_0^t \| \Phi_{\mathbf{p}_i}(\mathbf{Y}(s)) - \Phi_{\mathbf{p}_i}( \mathbf{y}(s) ) \|_\infty ds + \| M_{\mathbf{p}_i}(t) \|_\infty.
$$
We may now average over $i \in [N]$ to obtain
\begin{equation}
\label{eq:A_recursion}
A(P, t) \le \int_0^t  \frac{1}{N} \sum\limits_{i \in [N]} \| \Phi_{\mathbf{p}_i}( \mathbf{Y}(s) ) - \Phi_{\mathbf{p}_i}( \mathbf{y}(s)) \|_\infty ds + \frac{1}{N} \sum\limits_{i \in [N]} \| M_{\mathbf{p}_i}(t) \|_\infty.
\end{equation}
To apply the same proof strategy as Theorem \ref{thm:deviation_rapidly_mixing}, two key ingredients are needed: (1) a bound on the integrand in \eqref{eq:A_recursion} in terms of projections of $\mathbf{Y}(t) - \mathbf{y}(t)$ related to $P$, and (2) a concentration inequality for the term on the right hand side of \eqref{eq:A_recursion}.

Since $\Phi_{\mathbf{p}_i}$ is a nonlinear function, achieving (1) is a non-trivial task. Indeed, if $\Phi_{\mathbf{p}_i}$ is a generic $L$-Lipschitz function, we can only expect to bound the integrand of \eqref{eq:A_recursion} by $L \| \mathbf{Y}(s) - \mathbf{y}(s) \|$ (where $\| \cdot \|$ is an appropriately chosen norm on $\Delta(\cS)^N$) which will generally not be close to zero. However, by exploiting the structure of $\Phi_{\mathbf{p}_i}$, we can show that the integrand of \eqref{eq:A_recursion} is small, provided that a well-chosen collection of (deterministic) linear projections of $\mathbf{Y}(t) - \mathbf{y}(t)$ are also small on average. This is formalized in the following lemma. See Section \ref{sec:gamma} for the proof.

\begin{lemma}
\label{lemma:Gamma_properties}
Fix a sub-stochastic matrix $P \in \reals^{N \times N}$ with maximum column sum at most $\cR$ and a time horizon $T \ge 0$. Suppose further that $W$ has a maximum column sum at most $\cR$. There exists a deterministic set of non-negative matrices $\Gamma(P, T)$ such that the following hold: 
\begin{enumerate}

\item \label{item:Gamma_basic_properties}
For all $Q \in \Gamma(P, T)$, $\theta(Q) \le \max \{ \theta(P), \theta(W) \}$ and $Q$ has a maximum column sum bounded by $\cR$.

\item \label{item:Gamma_lipschitz}
For all $t \in [0,T]$, 
$$
\frac{1}{N} \sum\limits_{i \in [N]} \| \Phi_{\mathbf{p}_i}( \mathbf{Y}(t)) - \Phi_{\mathbf{p}_i} ( \mathbf{y}(t)) \|_\infty \le  (   L_\rho \cR + 1) | \cS |^2 \sup\limits_{Q \in \Gamma(P, T)} A(Q, t),
$$
where $L_\rho$ is defined in Assumption \ref{as:rho}.

\end{enumerate}
\end{lemma}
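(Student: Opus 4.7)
The plan is to take $\Gamma(P, T) := \{I, W\}$ (the $N \times N$ identity together with the aggregation matrix), uniformly in $P$ and $T$. First, I verify the structural conditions of Item \ref{item:Gamma_basic_properties}. We have $\theta(I) = 1/\sqrt{N}$, and Cauchy--Schwarz applied to each row of the row-stochastic matrix $W$ gives $\|\mathbf{w}_j\|_2^2 \ge \|\mathbf{w}_j\|_1^2 / N = 1/N$, so that $\|W\|_F^2 \ge 1$ and $\theta(W) \ge 1/\sqrt{N}$. Therefore both $\theta(I)$ and $\theta(W)$ are bounded by $\max\{\theta(P), \theta(W)\}$. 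Row-stochasticity also forces the maximum column sum of $W$ to be at least $1$, so $\cR \ge 1$, ensuring that both $I$ and $W$ have column sums bounded by $\cR$.

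For Item \ref{item:Gamma_lipschitz}, the core is a per-agent Lipschitz bound. Using the split $ab - a'b' = (a-a')b + a'(b-b')$ together with $\rho_j^{\alpha \beta} \in [0,1]$, $y_j^\beta \in [0,1]$, and the $L_\rho$-Lipschitz condition on $\rho_j^{\beta \alpha}$, one obtains
\[
\bigl|Y_j^\beta \rho_j^{\beta \alpha}(\overline{Y}_j) - y_j^\beta \rho_j^{\beta \alpha}(\overline{y}_j)\bigr| \le \|Y_j - y_j\|_\infty + L_\rho \|\overline{Y}_j - \overline{y}_j\|_\infty,
\]
with the analogous bound for the $Y_j^\alpha \rho_j^{\alpha \beta}$ terms. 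Summing over $\beta$ in the definition of $\Phi_j^\alpha$ and using $r_j \le 1$ yields the per-agent estimate $\|\Phi_j(\mathbf{Y}) - \Phi_j(\mathbf{y})\|_\infty \le 2|\cS|(\|Y_j - y_j\|_\infty + L_\rho \|\overline{Y}_j - \overline{y}_j\|_\infty)$. Crucially, the identity $\overline{Y}_j - \overline{y}_j = Y_{\mathbf{w}_j} - y_{\mathbf{w}_j}$ (where $\mathbf{w}_j$ is the $j$th row of $W$) rewrites the second term in the required projection form.

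Now multiplying by $p_{ij}$, summing over $j$, and averaging over $i$ interchanges the indices so that only the column sums $c_j := \sum_i p_{ij} \le \cR$ of $P$ enter the right-hand side:
\[
\frac{1}{N}\sum_i \|\Phi_{\mathbf{p}_i}(\mathbf{Y}) - \Phi_{\mathbf{p}_i}(\mathbf{y})\|_\infty \le \frac{2|\cS|}{N}\sum_j c_j\bigl(\|Y_j - y_j\|_\infty + L_\rho \|Y_{\mathbf{w}_j} - y_{\mathbf{w}_j}\|_\infty\bigr).
\]
Bounding $c_j \le \cR$ and recognizing $\frac{1}{N}\sum_j \|Y_j - y_j\|_\infty = A(I,t)$ and $\frac{1}{N}\sum_j \|Y_{\mathbf{w}_j} - y_{\mathbf{w}_j}\|_\infty = A(W,t)$ gives $2|\cS|\cR(1 + L_\rho)\sup_{Q \in \{I,W\}} A(Q,t)$, which is dominated by $\cR(L_\rho+1)|\cS|^2 \sup_{Q \in \Gamma(P,T)} A(Q,t)$ once $|\cS| \ge 2$ (the case $|\cS| = 1$ is trivial).

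The principal subtlety is the order of operations: taking $\|\cdot\|_\infty$ of $\Phi_{\mathbf{p}_i}$ too early collapses $P$'s structure into row sums, whereas postponing the norm until after the index interchange produces the column sums $c_j$ (this also explains why sub-stochasticity of $P$ is never used --- only the column-sum bound $\cR$ matters). Although the statement permits $\Gamma$ to depend on $(P,T)$, the construction above is uniform and enjoys the closure property $\Gamma(Q,T) \subseteq \Gamma(P,T)$ for all $Q \in \Gamma(P,T)$, which is precisely what enables the Gronwall-style iteration in the subsequent proof of Theorem \ref{thm:deviation}.
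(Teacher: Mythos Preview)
Your computation of $\theta(I)$ is incorrect, and this is fatal to the construction. Since $\|I\|_F^2 = N$, we have $\theta(I) = \frac{1}{\sqrt{N}}\|I\|_F = 1$, not $1/\sqrt{N}$. On the other hand, for any row-sub-stochastic matrix $Q$ one has $\sum_j q_{ij}^2 \le \sum_j q_{ij} \le 1$ row by row, so $\theta(Q) \le 1$ always, with equality precisely when every row is a standard basis vector. Thus $\theta(I) = 1 \ge \max\{\theta(P),\theta(W)\}$ in general, and Item~\ref{item:Gamma_basic_properties} fails for $Q = I$.

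This is not a cosmetic issue. The $\theta$-bound in Item~\ref{item:Gamma_basic_properties} is exactly what feeds into Lemma~\ref{lemma:main_concentration}: if $I \in \Gamma_\infty$, you would need $\tfrac{1}{N}\sum_i \|M_i(t)\|_\infty$ to concentrate around $O(\theta(W))$, but each $M_i(t)$ is an $O(1)$ martingale with $O(1)$ jumps, so this average is order one, not $\theta(W)$. The entire point of the paper's construction is to \emph{avoid} introducing $I$. Instead of bounding $|\rho^{\alpha\beta}(\overline{y}_j)(Y_j^\alpha - y_j^\alpha)|$ by $\|Y_j - y_j\|_\infty$ as you do, the paper keeps the deterministic factor $\rho^{\alpha\beta}(\overline{y}_j(t))$ attached, summing first and only then taking absolute values. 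This packages the second term as $\|Y_{\mathbf{q}_i} - y_{\mathbf{q}_i}\|_\infty$ for a matrix $Q$ whose entries satisfy $q_{ij} \le p_{ij}$ (since $r_i, \rho^{\alpha\beta} \le 1$), so that $\theta(Q) \le \theta(P)$ automatically. That is the missing idea: the set $\Gamma(P,T)$ must be built from matrices dominated entrywise by $P$ (together with $W$), and this forces $\Gamma$ to genuinely depend on $P$ and on the deterministic trajectory $\{\mathbf{y}(t)\}_{t \le T}$.
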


We remark that a consequence of the non-linearity of $\Phi_{\mathbf{p}_i}$ is that the set $\Gamma(P, T)$ also depends on the trajectory $\{ \mathbf{y}(t) \}_{0 \le t \le T}$ in a non-linear manner
(see \eqref{eq:gamma_p_t} in Section \ref{sec:gamma} for the explicit construction of $\Gamma(P,T)$).
Nevertheless, since $\mathbf{y}(t)$ is a deterministic quantity, we shall see that the dependence on $\{\mathbf{y}(t) \}_{0 \le t \le T}$ does not significantly complicate the analysis of $\sup_{Q \in \Gamma(P, T) } A(Q, t)$. 

In the remainder of the section, set $L : = (L_\rho \cR + 1) | \cS |^2$. Applying Lemma \ref{lemma:Gamma_properties} to \eqref{eq:A_recursion} shows that, for $t \in [0,T]$, 
\begin{equation}
\label{eq:A_recursion_v2}
A(P, t) \le L  \int_0^t \sup\limits_{Q \in \Gamma(P, T)} A(Q, s) ds + \frac{1}{N} \sum\limits_{i \in [N]} \| M_{\mathbf{p}_i}(t) \|_\infty.
\end{equation}
To follow the proof of Theorem \ref{thm:deviation_rapidly_mixing}, the left hand side of \eqref{eq:A_recursion_v2} must be the same as the integrand on the right hand side of \eqref{eq:A_recursion_v2}. To this end, we construct a convenient subset of sub-stochastic matrices, denoted by $\Gamma_\infty$, for which $\sup_{P \in \Gamma_\infty} A(P, t)$ satisfies a recursive inequality that can be handled by Gr\"{o}nwall's inequality. 

\begin{definition}[The set $\Gamma_\infty$]
\label{def:Gamma_infinity}
Construct a sequence of sets $\{\Gamma_k\}_{k \ge 0}$ such that $\Gamma_0 : = \{ \mathbf{11}^\top / N\}$ and $\Gamma_k : = \bigcup_{P \in \Gamma_{k-1}} \Gamma(P, T)$ for $k \ge 1$. We define $\Gamma_\infty : = \bigcup_{k \ge 0} \Gamma_k$. 
\end{definition}

By the construction of $\Gamma_\infty$, we have that $P \in \Gamma_\infty \Rightarrow \Gamma(P, T) \subset \Gamma_\infty$. For $P \in \Gamma_\infty$, \eqref{eq:A_recursion_v2} implies that for $t \in [0,T]$,
$$
A(P, t) \le L  \int_0^t \sup\limits_{Q \in \Gamma_\infty} A(Q, s) ds + \frac{1}{N} \sum\limits_{i \in [N]} \| M_{\mathbf{p}_i}(t) \|_\infty. 
$$
Maximizing both sides over $P \in \Gamma_\infty$ shows that, for $t \in [0,T]$,
\begin{equation}
\label{eq:thm_concentration_gronwall_setup}
\sup\limits_{P \in \Gamma_\infty} A(P, t) \le L \int_0^t \sup\limits_{P \in \Gamma_\infty} A(P, s) ds + \sup\limits_{P \in \Gamma_\infty} \frac{1}{N} \sum\limits_{i \in [N]} \| M_{\mathbf{p}_i}(t) \|_\infty.
\end{equation}
We are now in a position to apply Gr\"{o}nwall's inequality to $\sup_{P \in \Gamma_\infty} A(P, t)$; we go through the details in the formal proof below. The final missing component of the proof of Lemma \ref{lemma:main_concentration} is a concentration inequality for the second term on the right hand side of \eqref{eq:thm_concentration_gronwall_setup}. The following lemma establishes this for a \emph{fixed} $P$. 

\begin{lemma}
\label{lemma:main_concentration}
Let $N, T, \epsilon$ be given. If $P$ is a sub-stochastic matrix with maximum column sum at most $\cR$ and satisfies $\theta(P) \le \theta(W)$, then 
$$
\p \left( \sup\limits_{0 \le t \le T} \frac{1}{N} \sum\limits_{i \in [N]} \left \| M_{\mathbf{p}_i}(t) \right \|_\infty \ge 10 \cR | \cS | e^T \theta(W) + \epsilon \right) \le  | \cS | \mathrm{exp} \left( - \frac{N \epsilon^2}{ 160 \cR | \cS |^2 e^T } \right).
$$
\end{lemma}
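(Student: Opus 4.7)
The plan is to reduce the claim to a scalar concentration problem and then apply martingale techniques that exploit the orthogonality of per-agent martingales. Using $\|v\|_\infty \le \sum_{\alpha \in \cS}|v_\alpha|$ and a union bound over $\alpha$, it suffices to show, for each fixed $\alpha \in \cS$, an exponential tail for $\sup_{0 \le t \le T}\tfrac{1}{N}\sum_i|M_{\mathbf{p}_i}^\alpha(t)|$ at scale $e^T\theta(W) + \epsilon/|\cS|$ with rate $\exp(-N(\epsilon/|\cS|)^2/(240 \cR e^T))$. The key algebraic step is the per-agent decomposition $M_{\mathbf{p}_i}^\alpha(t) = \sum_j p_{ij}\,M_j^\alpha(t)$, where $M_j^\alpha(t) := Y_j^\alpha(t) - Y_j^\alpha(0) - \int_0^t \Phi_j^\alpha(\mathbf{Y}(s))\,ds$ is the martingale driven by agent $j$'s Poisson clock alone. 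Since distinct agents' clocks have no simultaneous rings almost surely, $[M_j^\alpha, M_k^\alpha] \equiv 0$ for $j \ne k$, and consequently $\E[M_j^\alpha(t) M_k^\alpha(t)] = 0$.

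Cauchy--Schwarz followed by this orthogonality gives
\[
\Bigl(\tfrac{1}{N}\sum_i|M_{\mathbf{p}_i}^\alpha(t)|\Bigr)^{\!2} \le \tfrac{1}{N}\sum_i M_{\mathbf{p}_i}^\alpha(t)^2 = M^\alpha(t)^\top Q\, M^\alpha(t), \qquad Q := \tfrac{1}{N}P^\top P,
\]
whose expectation equals $\sum_j Q_{jj}\,\E[M_j^\alpha(t)^2] \le T\,\mathrm{tr}(Q) = T\theta(P)^2 \le T\theta(W)^2$, already fixing the $\theta(W)$ scale of the statement. To upgrade this second-moment control to an exponential tail, I further decompose $\tfrac{1}{N}\sum_i M_{\mathbf{p}_i}^\alpha(t)^2 = A(t) + V(t)$, where $A(t) := \tfrac{1}{N}\sum_i [M_{\mathbf{p}_i}^\alpha]_t$ is the averaged quadratic variation and $V$ is a mean-zero martingale. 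Since $M_{\mathbf{p}_i}^\alpha$ only jumps when some agent $j$ updates, $A(T) \le \sum_j c_j\, N_j(T)$ with $c_j := \tfrac{1}{N}\sum_i p_{ij}^2 \le \cR/N$, $\sum_j c_j \le \theta(W)^2$, and $N_j$ the Poisson clock count of agent $j$; a Bennett-type tail bound for sums of scaled independent Poissons then produces exponential concentration of $A(T)$ around $T\theta(W)^2$.

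The main obstacle is the martingale $V$, whose jumps $\Delta V(s) = \tfrac{2\,\Delta Y_j^\alpha(s)}{N}\sum_i p_{ij}\,M_{\mathbf{p}_i}^\alpha(s-)$ depend on the current, a priori unbounded martingale values. The plan to circumvent this is a stopping-time truncation: define $\tau := \inf\{t \ge 0 : \tfrac{1}{N}\sum_i M_{\mathbf{p}_i}^\alpha(t)^2 > K\}$ for a threshold $K$ of order $e^T\theta(W)^2$. Before $\tau$, Cauchy--Schwarz in the index $i$ bounds $|\Delta V(s)| \le 2\sqrt{\cR K/N}$, so Freedman's inequality applied to the stopped process $V(t\wedge\tau)$ produces an exponential tail of the form $\exp(-N y^2/(C\cR e^T))$, matching the shape of the target bound. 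Separately, $\p(\tau \le T)$ is controlled via Doob's $L^2$ maximal inequality combined with the mean estimate $\E[\sup_t A(t)] \le 4T\theta(W)^2$, and this is precisely where the factor $e^T$ in the statement originates: $K$ must be chosen large enough that the stopping-time failure does not dominate. Assembling the two exponential tails, extracting a square root via $\sqrt{a+b} \le \sqrt{a}+\sqrt{b}$, and tracking constants carefully yields the claimed probability bound.
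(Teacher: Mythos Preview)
Your overall strategy---reduce to the scalar process $\tfrac{1}{N}\sum_i M_{\mathbf{p}_i}^\alpha(t)^2$, exploit the orthogonality of the per-agent martingales $M_j^\alpha$, and control this quadratic form via Freedman---is on the right track and is close in spirit to the paper's approach. The difficulty, as you correctly identify, is that the jumps of the martingale part $V$ scale with the current (unbounded) martingale values, so some form of self-regulation is needed.

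The gap is in your handling of the stopping time. You propose to bound $\p(\tau\le T)$ ``via Doob's $L^2$ maximal inequality combined with the mean estimate $\E[\sup_t A(t)]\le 4T\theta(W)^2$.'' But Doob's $L^2$ inequality together with Markov gives at best $\p(\tau\le T)\le C T\theta(W)^2/K$, which is a polynomial (in fact constant-order, for $K\asymp e^T\theta(W)^2$) bound and cannot be made to decay like $\exp(-cN\epsilon^2)$. No enlargement of $K$ short of $K$ exponential in $N$ will rescue this, and such a $K$ would vacate the Freedman step. So as written, the argument does not deliver an exponential tail.

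The paper circumvents this with a different device: instead of stopping, it builds the self-normalized supermartingale
\[
\widetilde V^\alpha(t)\;=\;\sum_{k=1}^{t/\xi}\frac{V^\alpha(k\xi)-V^\alpha((k-1)\xi)}{2\sqrt{V^\alpha((k-1)\xi)}}\;-\;\frac{3t}{\delta}\,\theta(W)^2,
\]
whose jumps and predictable variation are \emph{uniformly} bounded (the $\sqrt{V^\alpha}$ in the denominator exactly cancels the $V^\alpha$-dependence in the variance bound of Lemma~\ref{lemma:V_properties}). Freedman then applies directly, and the concavity inequality $\sqrt{a}-\sqrt{b}\le (a-b)/(2\sqrt{b})$ converts the bound on $\widetilde V^\alpha$ into a bound on $\sqrt{V^\alpha(t)}$. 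Your stopping-time route can in fact be salvaged---observe that $\{\tau\le T\}$ is precisely the event that the \emph{stopped} quantity $A(t\wedge\tau)+V(t\wedge\tau)$ exceeds $K$, so the Freedman and Bennett bounds you already have (valid up to $\tau$) control $\p(\tau\le T)$ directly, and no separate Doob step is needed---but the proposal as written does not make this identification.
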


We remark that Lemma \ref{lemma:main_concentration} is perhaps the most technically involved result of this paper. Unlike the process $M_{av}(t)$, which is a martingale with uniformly bounded jumps, the process $\frac{1}{N} \sum_{i \in [N]} \| M_{\mathbf{p}_i}(t) \|_\infty$ is \emph{not} a martingale and has jumps which depend on the agents who update at a particular point in time. We therefore are required to perform a careful and tight analysis of the process to derive Lemma \ref{lemma:main_concentration}. We defer the details to Section \ref{sec:martingale_tail_inequalities}. 

A natural way to use Lemma \ref{lemma:main_concentration} to study the supremum of the martingale terms over $P \in \Gamma_\infty$ is to take a union bound over elements of $\Gamma_\infty$. Unfortunately, $\Gamma_\infty$ generally has infinitely many elements. To get around this issue, we bound the covering number of $\Gamma_\infty$. See Section \ref{sec:gamma} for the proof.

\begin{lemma}
\label{lemma:covering_number}
For every $\delta > 0$, there exists $\widetilde{\Gamma}_\delta \subset \Gamma_\infty$ such that the following hold: 
\begin{enumerate}
    \item For every $P \in \Gamma_\infty$, there exists $Q \in \widetilde{\Gamma}_\delta$ such that $\frac{1}{N} \sum_{i,j \in [N]} | p_{ij} - q_{ij} | \le \delta$; 
    \item $|\widetilde{\Gamma}_\delta | \le (c T | \log \delta | / \delta )^{c' | \log \delta |}$, where $c = c( | \cS |, r_{max}, L_\rho)$ and $c' = c'(r_{max})$.
\end{enumerate}
\end{lemma}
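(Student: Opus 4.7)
The plan is to construct $\widetilde{\Gamma}_\delta$ by truncating the recursive construction of $\Gamma_\infty$ at a logarithmic depth and discretizing the continuous time parameters that index $\Gamma(P,T)$. First I would unpack the construction behind Lemma~\ref{lemma:Gamma_properties}. Expanding $\Phi_{\mathbf{p}_i}(\mathbf{Y}(t)) - \Phi_{\mathbf{p}_i}(\mathbf{y}(t))$ into terms where either (i) the state difference $Y_j - y_j$ appears directly, or (ii) the rate-function difference $\rho_j(\overline{Y}_j) - \rho_j(\overline{y}_j)$ appears, and then using $\overline{Y}_j - \overline{y}_j = (W(\mathbf{Y}^\beta - \mathbf{y}^\beta))_j$ together with Lipschitzness of $\rho$, I expect each matrix $Q \in \Gamma(P,T)$ to take one of the forms $P D_t$ or $P D_t W$, where $D_t$ is a diagonal matrix whose entries equal $r_j \cdot g(\mathbf{y}(t))_j$ for some $t \in [0,T]$ and for $g$ drawn from a finite family of templates of cardinality $C_0 = C_0(|\cS|, L_\rho)$. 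Thus each element of $\Gamma(P,T)$ is parameterized by one of $C_0$ discrete templates together with a continuous time $t \in [0,T]$.

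Next, the hypothesis $r_{max} < 1$ produces multiplicative shrinkage across levels. Since every template carries an $r_j \le r_{max}$ factor, and both the templates and $W$ are sub-stochastic with maximum column sum $\le \cR$, a short induction on $k$ gives $\|Q\|_1 / N \le (r_{max})^{k}$ for every $Q \in \Gamma_k$, where $\|\cdot\|_1$ denotes entrywise sum. Setting $k_\star := \lceil \log(1/\delta) / \log(1/r_{max}) \rceil + 1$ therefore ensures $\|Q\|_1/N \le \delta/2$ for all $Q \in \Gamma_k$ with $k > k_\star$, so a single representative matrix chosen from any such deep level covers all of them to within $\ell_1/N$-distance $\delta$. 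For the remaining levels $k \le k_\star$, I would discretize the time parameter: since $\dot{\mathbf{y}} = \Phi(\mathbf{y})$ is uniformly bounded, $\mathbf{y}(\cdot)$ is Lipschitz in $t$, so replacing $t$ by the nearest point of a grid of spacing $\eta$ changes each template entry by $O(\eta)$. Propagating this error through the recursion yields a total $\ell_1/N$-error of order $k_\star \eta$; choosing $\eta := c_\eta \delta / |\log\delta|$ makes this at most $\delta/2$.

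At depth $k$ the discretized construction contains at most $(C_0 \lceil T/\eta\rceil)^k$ matrices, so
\[
|\widetilde{\Gamma}_\delta| \;\le\; \sum_{k=0}^{k_\star} \bigl(C_0 T/\eta\bigr)^k \;\le\; 2\left( \frac{C_0 T |\log\delta|}{c_\eta \delta} \right)^{k_\star} \;=\; \left(\frac{c T |\log\delta|}{\delta}\right)^{c' |\log\delta|},
\]
which matches the claimed bound with $c = c(|\cS|, L_\rho, r_{max})$ and $c' = c'(r_{max})$. The main obstacle, and the step most sensitive to the precise formulation of $\Gamma(P,T)$, is verifying the ``$r_{max}$ shrinkage per level'' claim: if $\Gamma(P,T)$ contained an element with $\|Q\|_1/N$ of the same order as $\|P\|_1/N$ but without an explicit $r_{max}$ factor, the recursion would not terminate and the logarithmic-depth truncation would fail. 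The assumption $r_{max} < 1$ from Theorem~\ref{thm:deviation} is precisely what prevents this, and a careful accounting of where the factors $r_j$ enter in the construction of $\Gamma(P,T)$ will be the technical crux of the argument.
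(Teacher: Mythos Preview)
Your overall strategy---truncate at logarithmic depth using $r_{max}<1$, then discretize the time parameters---is exactly right and matches the paper. But the execution has a gap precisely where you anticipated one.

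The explicit construction in the paper is $\Gamma(P,T)=\{W\}\cup\{\mathrm{Diag}(\mathbf r)\,\boldsymbol{\rho}^{\alpha\beta}(t)\,P:\alpha,\beta\in\cS,\ t\in[0,T]\}$. Two things differ from your guess: the diagonal factor multiplies $P$ on the \emph{left}, and the ``$W$-branch'' is just the fixed matrix $W$, not $PD_tW$. The second point is fatal to your induction: since $W\in\Gamma(P,T)$ for every $P$, we have $W\in\Gamma_k$ for all $k\ge1$, and $\|W\|_1/N=1$. So the claim $\|Q\|_1/N\le r_{max}^k$ for all $Q\in\Gamma_k$ is simply false. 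Your own caveat---``if $\Gamma(P,T)$ contained an element without an explicit $r_{max}$ factor, the recursion would not terminate''---is exactly what happens.

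The resolution is that the $W$-branch, rather than breaking the argument, makes it \emph{simpler}: hitting $W$ resets the recursion. Since all the diagonal factors $\mathrm{Diag}(\mathbf r)\boldsymbol{\rho}^{\alpha\beta}(t)$ commute, every element of $\Gamma_\infty$ is of the form
\[
\mathrm{Diag}(\mathbf r)^m\Bigl(\prod_{\ell=1}^m\boldsymbol{\rho}^{\alpha_\ell\beta_\ell}(t_\ell)\Bigr)\cdot B,
\qquad B\in\{W,\ \mathbf{11}^\top/N\},
\]
for some $m\ge0$. The correct parameter is therefore $m$, the number of diagonal factors since the last reset, not the recursion level $k$; entrywise one has $X_{ij}\le r_{max}^m B_{ij}$, giving $\|X\|_1/N\le r_{max}^m$. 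From here your truncation at $M=\lceil|\log\delta|/|\log r_{max}|\rceil$ and your grid discretization of $(t_1,\ldots,t_m)\in[0,T]^m$ with spacing $\eta=\delta/(ML_\rho)$ go through exactly as you described, and the counting $|\widetilde\Gamma_\delta|\le(cT|\log\delta|/\delta)^{c'|\log\delta|}$ follows. So the fix is not to abandon your plan but to parameterize by $m$ rather than $k$ once you have the correct form of $\Gamma(P,T)$.
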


We can now put together all our intermediate results to prove the theorem. 

\begin{proof}[Proof of Theorem \ref{thm:deviation}]
Applying Gr\"{o}nwall's inequality to \eqref{eq:thm_concentration_gronwall_setup}, we obtain 
\begin{equation}
\label{eq:deviation_gronwall_result}
\sup\limits_{\substack{0 \le t \le T ,  P \in \Gamma_\infty}} A(P, t) \le \left( \sup\limits_{\substack{0 \le t \le T ,  P \in \Gamma_\infty}} \frac{1}{N} \sum\limits_{i \in [N]} \| M_{\mathbf{p}_i}(t) \|_\infty \right) e^{L T},
\end{equation}
where we recall that $L : = (L_\rho \cR + 1) | \cS|^2$.
Let $\delta > 0$. To handle the supremum of the martingale terms on the right hand side, we will first replace the supremum over the infinite set $\Gamma_\infty$ with the finite set $\widetilde{\Gamma}_\delta$ (defined in Lemma \ref{lemma:covering_number}) and then take a union bound over elements of $\widetilde{\Gamma}_\delta$. To this end, first notice that we have the bound
\[
\| M_i(t) \|_\infty \le \| Y_i(t) - Y_i(0) \|_\infty + \int_0^t \| \Phi_i( \mathbf{Y}(s)) \|_\infty ds \le 1 + t.
\]
For any two matrices $P, Q \in \reals_{\ge 0}^{N \times N}$, it follows that
\begin{align}
& \left | \frac{1}{N} \sum\limits_{i \in [N]} \| M_{\mathbf{p}_i}(t) \|_\infty - \frac{1}{N} \sum\limits_{i \in [N]} \| M_{\mathbf{q}_i}(t) \|_\infty \right|  \le \frac{1}{N} \sum\limits_{i \in [N]} \| M_{\mathbf{p}_i}(t) - M_{\mathbf{q}_i}(t) \|_\infty \nonumber \\
\label{eq:M_P_Q_bound}
& \hspace{1cm} \le \frac{ \max_{i \in [N]} \| M_i(t) \|_\infty }{N} \sum\limits_{i \in [N]} \| \mathbf{p}_i - \mathbf{q}_i \|_1 \le \frac{1 + t}{N} \sum_{i,j \in [N]} | p_{ij} - q_{ij} |.
\end{align}
We are now ready to put everything together to bound the tail of $\sup_{P \in \Gamma_\infty} A(P, t)$. 
Set $\delta := \frac{\epsilon}{2 e^{(L+1)T}} \le \frac{\epsilon}{2 (T + 1) e^{L  T}}$. It holds that
\begin{align*}
\p & \left( \sup\limits_{\substack{0 \le t \le T,  P \in \Gamma_\infty}} A(P, t) \ge 10 \cR | \cS | e^{(L + 1) T} \theta(W) + \epsilon \right) \\
& \hspace{2cm} \stackrel{(a)}{\le} \p \left( \sup\limits_{\substack{0 \le t \le T,  P \in \Gamma_\infty}} \frac{1}{N} \sum\limits_{i \in [N]} \| M_{\mathbf{p}_i}(t) \|_\infty \ge 10 \cR | \cS | e^T \theta(W) + \frac{\epsilon}{e^{L T}} \right) \\
& \hspace{2cm} \stackrel{(b)}{\le} \p \left( \sup\limits_{\substack{0 \le t \le T , P \in \widetilde{\Gamma}_\delta} } \frac{1}{N} \sum\limits_{i \in [N]} \| M_{\mathbf{p}_i}(t) \|_\infty \ge 10 \cR | \cS | e^T \theta(W) + \frac{\epsilon}{2 e^{L T}} \right) \\
& \hspace{2cm} \stackrel{(c)}{\le} | \widetilde{\Gamma}_\delta | | \cS |\mathrm{exp} \left( - \frac{ N \epsilon^2}{1000 \cR | \cS|^2 e^{(2L   + 1)T} } \right),
\end{align*}
where $(a)$ is due to \eqref{eq:deviation_gronwall_result}; $(b)$ follows from the choice of $\delta$, \eqref{eq:M_P_Q_bound} and Lemma \ref{lemma:covering_number}; finally, $(c)$ follows from a union bound and an application of Lemma \ref{lemma:main_concentration}. To conclude the proof, 
we can let $h(T, \epsilon)$ be the bound on $| \widetilde{\Gamma}_\delta|$ from Lemma \ref{lemma:covering_number}, multiplied by $| \cS |$.
\end{proof}

\section{Properties of $\Gamma(P,T)$ and $\Gamma_\infty$: Proofs of Lemmas \ref{lemma:Gamma_properties} and \ref{lemma:covering_number}}
\label{sec:gamma}

We begin by explicitly defining the set $\Gamma(P, T)$. For $\alpha, \beta \in \cS$ and $t \ge 0$, let $\boldsymbol{\rho}^{\alpha \beta}(t)$ be the $N \times N$ diagonal matrix with $i$th diagonal entry given by $\rho^{\alpha \beta}( \overline{y}_i(t))$. We also let $\mathrm{Diag}(\mathbf{r})$ be the diagonal matrix with $i$th diagonal entry equal to $r_i$. We now define 
\begin{equation}
\label{eq:gamma_p_t}
\Gamma(P, T) : = \{ W \} \cup \left \{ \mathrm{Diag}(r) \boldsymbol{\rho}^{\alpha \beta}(t) P : \alpha, \beta \in \cS; t \in [0,T] \right \}.
\end{equation}

\begin{proof}[Proof of Lemma \ref{lemma:Gamma_properties}]
We start by proving Item~\#\ref{item:Gamma_basic_properties}. Let $Q \in \Gamma(P, T)$. If $Q = W$, the claim is immediate. Else if $Q \neq W$, we can write $Q = \mathrm{Diag}(\mathbf{r}) \boldsymbol{\rho}^{\alpha \beta}(t) P$ for some $\alpha, \beta \in \cS$ and $t \in [0,T]$. In particular, we can bound the entries of $Q$ as $q_{ij} = r_i \rho^{\alpha \beta}(\overline{y}_i(t)) p_{ij} \le p_{ij}$, which follows since $r_i < 1$ and $\rho^{\alpha \beta} \le 1$. It immediately follows that $\theta(Q) \le \theta(P)$ and that $Q$ has a maximum column sum at most $\cR$. 

We now turn to the proof of Item~\#\ref{item:Gamma_lipschitz}. Notice that for any $i \in [N]$ we can write $\Phi_j^\alpha( \mathbf{Y}(t)) - \Phi_j^\alpha(\mathbf{y}(t)) = \sum_{\beta \in \cS} ( f_j^{\beta \alpha}(t) - f_j^{\alpha \beta}(t) )$, where
\begin{align*}
f_j^{\alpha \beta}(t) & : = r_j Y_j^\alpha(t) \rho^{\alpha \beta} ( \overline{Y}_j(t)) - r_j y_j^\alpha(t) \rho^{\alpha \beta}( \overline{y}_j(t)) \\
& = r_j Y_j^\alpha(t) \left( \rho^{\alpha \beta} ( \overline{Y}_j(t)) -  \rho^{\alpha \beta}( \overline{y}_j(t)) \right) + r_j\rho^{\alpha \beta}( \overline{y}_j(t)) ( Y_j^\alpha(t) - y_j^\alpha(t) ) \\
& \le L_\rho \| \overline{Y}_j(t) - \overline{y}_j(t) \|_\infty + r_j \rho^{\alpha \beta}( \overline{y}_j(t)) ( Y_j^\alpha(t) - y_j^\alpha(t)),
\end{align*}
where the last inequality follows since $\rho^{\alpha \beta}$ is $L_\rho$-Lipschitz by Assumption \ref{as:rho} and since $r_j <1$. Next, define $Q : = \mathrm{Diag}(\mathbf{r}) \boldsymbol{\rho}^{\alpha \beta}(t) P$. Taking a weighted sum with respect to $\mathbf{p}_i$, we obtain
$$
| f_{\mathbf{p}_i}^{\alpha \beta}(t)|  : = \left| \sum\limits_{j \in [N]} p_{ij} f_j^{\alpha \beta}( t) \right | \le L_\rho \sum\limits_{j \in [N]} p_{ij} \| \overline{Y}_j(t) - \overline{y}_j(t) \|_\infty +  \| Y_{\mathbf{q}_i}(t) - y_{\mathbf{q}_i}(t) \|_\infty,
$$
where $\mathbf{q}_i$ is the $i$th row vector of $Q$. Now taking an average over $i \in [N]$,
\begin{align}
\frac{1}{N} \sum\limits_{i \in [N]}  \left| f_{\mathbf{p}_i}^{\alpha \beta}(t)\right|  & \le \frac{L_\rho}{N} \sum\limits_{j \in [N]} \left( \sum\limits_{i \in [N]} p_{ij} \right) \| \overline{Y}_j(t) - \overline{y}_j(t) \|_\infty + A(Q, t) \nonumber \\
\label{eq:f_abs_bound}
& \hspace{-1cm} \le \frac{L_\rho \cR}{N} \sum\limits_{j \in [N]} \| \overline{Y}_j(t) - \overline{y}_j(t) \|_\infty + A(Q, t) 
 \le (L_\rho \cR + 1) \sup\limits_{Q \in \Gamma(P, T)} A(Q, t). 
\end{align}
Above, the second inequality uses the bound on the maximum column sum of $P$ and the third inequality follows since $W, Q \in \Gamma(P, T)$.
Finally, to bound the quantity of interest, we have
\begin{align}
\label{eq:Phi_f_bound}
\frac{1}{N} \sum\limits_{i \in [N]} \left | \Phi^\alpha_{\mathbf{p}_i} ( \mathbf{Y}(t) ) - \Phi^\alpha_{\mathbf{p}_i} ( \mathbf{y}(t) ) \right | & \le \frac{1}{N} \sum\limits_{i \in [N]} \sum\limits_{\alpha, \beta \in \cS} | f_{\mathbf{p}_i}^{\alpha \beta}(t) |.
\end{align}
The desired result follows from combining \eqref{eq:f_abs_bound} and \eqref{eq:Phi_f_bound}. 
\end{proof}

We now turn to the proof of Lemma \ref{lemma:covering_number}, which concerns $\Gamma_\infty$ (see Definition \ref{def:Gamma_infinity}).

\begin{proof}[Proof of Lemma \ref{lemma:covering_number}]
For a non-negative integer $m$ as well as vectors $\boldsymbol{\alpha}, \boldsymbol{\beta} \in \cS^m$ and $\mathbf{t} \in [0,T]^m$, define the matrices $P(m, \boldsymbol{\alpha}, \boldsymbol{\beta}, \mathbf{t})  : = \mathrm{Diag}(\mathbf{r})^m \left( \prod_{\ell = 1}^m \boldsymbol{\rho}^{\alpha_\ell \beta_\ell} (t_\ell) \right) W$
as well as
$Q(m, \boldsymbol{\alpha}, \boldsymbol{\beta}, \mathbf{t}) := \mathrm{Diag}(\mathbf{r})^m \left( \prod_{\ell = 1}^m \boldsymbol{\rho}^{\alpha_\ell \beta_\ell} (t_\ell) \right) \mathbf{11}^\top / N$.
It can be readily seen from the definition of $\Gamma(P, T)$ and Definition \ref{def:Gamma_infinity} that $\Gamma_\infty = \Gamma_\infty^1 \cup \Gamma_\infty^2$, where $\Gamma_\infty^1$ is the set of all matrices $P( m, \boldsymbol{\alpha}, \boldsymbol{\beta}, \mathbf{t})$ where $m \in \integers_{\ge 0}$, $\boldsymbol{\alpha}, \boldsymbol{\beta} \in \cS^m$, and $\mathbf{t} \in [0,T]^m$. The set $\Gamma_\infty^2$ is of the same form with $P$ replaced by $Q$. Our goal is to bound the covering number of $\Gamma_\infty^1$ and $\Gamma_\infty^2$. 

For a fixed $\eta > 0$ and a positive integer $M$, define $\mathbb{T}^\eta : = \{ k \eta : k \in \integers_{\ge 0} \}$ as well as the set $\widetilde{\Gamma} : = \{ \mathbf{0} \} \cup \{ P( m, \boldsymbol{\alpha}, \boldsymbol{\beta}, \mathbf{s} ) \in \Gamma_\infty^1 : m \in [M]; \mathbf{s} \in ( [0,T] \cap \mathbb{T}^\eta)^m \}$. We claim that for each $X \in \Gamma_\infty^1$, there exists $\revision{X'} \in \widetilde{\Gamma}$ such that $\| X - X' \|_1 /N \le \delta$,
where $\| \cdot \|_1$ is the sum of the absolute entries of the input matrix.
If $X = P(m, \boldsymbol{\alpha}, \boldsymbol{\beta}, \boldsymbol{t})$ with $m \ge M$, then we can bound the entries of $X$ by $X_{ij} \le r_{max}^m w_{ij} \le r_{max}^M w_{ij}$ for all $i,j$ since $r_{max} < 1$. Setting $X' = \mathbf{0}$, it follows that $\| X - X'\|_1 / N = \| X \|_1 / N \le r_{max}^M$. On the other hand, suppose that $m < M$. Setting $X' = P(m, \boldsymbol{\alpha}, \boldsymbol{\beta}, \mathbf{s} )$ where $\mathbf{s} \in ( [0,T] \cap \mathbb{T}^\eta)^m$ with $\| \mathbf{t} - \mathbf{s} \|_\infty \le \eta$, we can bound the entries of $X - X'$ by 
\begin{equation}
\label{eq:X_X'_difference}
| X_{ij} - X'_{ij} | = r_i^m \left| \prod\limits_{\ell = 1}^m \rho^{\alpha_\ell \beta_\ell}( \overline{y}_i(t_\ell) ) - \prod\limits_{\ell = 1}^m \rho^{\alpha_\ell \beta_\ell} ( \overline{y}_i(s_\ell) ) \right| w_{ij} \le ( {m} L_\rho \eta) w_{ij},
\end{equation}
where the final inequality 
uses the fact that the product of $m$ $L_\rho$-Lipschitz functions bounded by 1 in magnitude is $(mL_\rho)$-Lipschitz.
Since $m \le M$, \eqref{eq:X_X'_difference} implies
that $\| X - X' \|_1 / N \le M L_\rho \eta$. 

To conclude the proof, it remains to choose appropriate values of $M, \eta$. In particular, we require that $\max \{ r_{max}^M, M L_\rho \eta \} \le \delta$, so we may set $M = \lceil | ( \log \delta) / ( \log r_{max}) | \rceil$ and $\eta = \delta / (M L_\rho)$. Moreover, it is readily seen through simple counting arguments that $| \widetilde{\Gamma}| \le M ( | \cS |^2 T / \eta)^M + 1 \le 2 M ( | \cS |^2 T / \eta)^M \le (e |S|^2 T / \eta )^M$, where the final inequality uses $2M \le e^M$ for $M \ge 1$. Through identical arguments we can also bound the covering number of $\Gamma_\infty^2$, and the desired result follows. 
\end{proof}
\vspace{-0.3cm}
\section{Martingale tail inequalities{: Proof of Lemma \ref{lemma:main_concentration}}}
\label{sec:martingale_tail_inequalities}
To prove the lemma, we will primarily work with the process $V_\delta^\alpha(t) : = \sqrt{ \sum_{i = 1}^N M_{\mathbf{p}_i}^\alpha(t)^2/N + \delta^2}$, where $\delta > 0$ is a fixed constant and $\alpha \in \cS$. This can be related to the process of interest as follows: 
\begin{equation}
\label{eq:M_V_inequality}
\sum_{i \in [N]} \frac{ \| M_{\mathbf{p}_i}(t) \|_\infty }{N} \le \sum_{\substack{\alpha \in \cS , i \in [N]}} \frac{ | M_{\mathbf{p}_i}^\alpha(t) | }{N} \le \sum_{\alpha \in \cS } \sqrt{ \sum_{i \in [N]} \frac{ M_{\mathbf{p}_i}^\alpha(t)^2 }{N} } \le \sum_{\alpha \in \cS} V^\alpha_\delta(t).
\end{equation}
An important consequence of \eqref{eq:M_V_inequality} is that any upper tail bounds we derive for $V^\alpha_\delta(t)$ can be translated to the process of interest. The following lemma, which is the key supporting result for Lemma \ref{lemma:main_concentration}, establishes some useful properties of $V^\alpha_\delta(t)$. 

\begin{lemma}
\label{lemma:V_properties}
Let $P \in \mathbb{R}^{N \times N}$ be a non-negative, sub-stochastic matrix with column sums bounded by $\cR$ and $\theta(P) \le \theta(W)$. Suppose also that $\delta^2 \ge \max \{ \theta(W)^2, 4 (T+1) \cR / N \}$. Then the following hold for $\xi$ sufficiently small: 
\begin{enumerate}
\item \label{item:V_increment_bound}
$\E[ V_\delta^\alpha(t + \xi) - V_\delta^\alpha(t) \vert \cF_t ] \le  \xi \theta(W) + o( \xi)$;
\item \label{item:V_absolute_jump}
It holds almost surely that all jumps of $V_\delta^\alpha(t)$ are at most $\frac{(T+1)\cR}{2 \delta N}$. 
\item \label{item:V_quadratic_variation}
$\E [ (V_\delta^\alpha(t + \xi) - V_\delta^\alpha(t))^2 \vert \cF_t ] \le \frac{8 \cR \xi}{N} + o(\xi)$. 
\end{enumerate}
\end{lemma}
The proof follows from standard but tedious calculations; we defer the details to Appendix \ref{sec:martingale_properties}. The next ingredient of the proof is a version of Freedman's inequality; we state it below for completeness. 

\begin{lemma}
\label{lemma:freedman}
Suppose that $\{X_t \}_{t \ge 0}$ is a continuous-time supermartingale adapted to the filtration $\{ \cF_t \}_{t \ge 0}$, and suppose further that $X_t$ has jumps that are bounded by $C$ in magnitude, almost surely. Then for any $x \ge 0$ and $\sigma > 0$, 
\[
\p ( X_t - X_0 \ge x \text{ and } \langle X \rangle_t \le \sigma^2 \text{ for some $t \ge 0$} ) \le \mathrm{exp} \left( - \frac{ x^2}{ 2( \sigma^2 + Cx/3)} \right),
\]
where $\langle X \rangle_t : = \lim_{\xi \to 0} \sum_{k = 0}^{\lfloor t / \xi \rfloor}  \E [ ( X_{(k + 1) \xi} - X_{k\xi})^2 \vert \cF_{k\xi} ]$ is the quadratic variation. 
\end{lemma}
Lemma \ref{lemma:freedman} was previously proved for continuous-time martingales by Shorack and Wellner, but their proof readily extends to the case of supermartingales as well; we defer the interested reader to \cite[Appendix B]{shorack_wellner} for details. 
\vspace{-0.2cm}
\begin{proof}[Proof of Lemma \ref{lemma:main_concentration}]
Consider the process $\widetilde{V}_\delta^\alpha(t) : = V_\delta^\alpha(t) - t \theta(W)$, which is a supermartingale by Item \#\ref{item:V_increment_bound} of Lemma \ref{lemma:V_properties}. The magnitude of the jumps of $\widetilde{V}_\delta^\alpha(t)$ is the same as that of $V_\delta^\alpha(t)$, and we further have that, for $\xi$ sufficiently small, 
\begin{multline}
\label{eq:V_tilde_quadratic_variation}
\E \left[ \left.  \left( \widetilde{V}_\delta^\alpha(t + \xi) - \widetilde{V}_\delta^\alpha(t) \right)^2 \right \vert \cF_t \right] = \E \left[ \left. \left( V_\delta^\alpha(t + \xi) - V_\delta^\alpha(t) -  \xi \theta(W) \right)^2 \right \vert \cF_t \right] \\
\le 2 \E \left[ \left. \left( V_\delta^\alpha(t + \xi) - V_\delta^\alpha(t) \right)^2 \right \vert \cF_t \right] + O ( \xi^2 ) \le \frac{16 \cR \xi }{N} + o(\xi),
\end{multline}
where the first inequality uses the relation $(a + b)^2 \le 2 a^2 + 2b^2$, and the second inequality follows from Item \#\ref{item:V_quadratic_variation} of Lemma \ref{lemma:V_properties}. An immediate consequence of the bound in \eqref{eq:V_tilde_quadratic_variation} is that $\langle \widetilde{V}_\delta^\alpha \rangle_t \le 16 t \cR / N$, which is, almost surely, at most $16 T \cR / N$ for $t \in [0,T]$. An application of Lemma \ref{lemma:freedman} to the process $\widetilde{V}_\delta^\alpha(t)$ yields
\begin{equation}
\label{eq:V_tilde_probability_bound}
\p \left( \sup_{0 \le t \le T} \{ \widetilde{V}_\delta^\alpha(t) - \widetilde{V}_\delta^\alpha(0) \} \ge \delta \right) \le \mathrm{exp} \left( - \frac{N \delta^2 / 2}{16 T \cR + (T+1) \cR / 6} \right) \hspace{-0.08cm} \le  \mathrm{exp} \left( - \frac{ N \delta^2}{40\cR e^T} \right). 
\end{equation}
Next, note that since $\widetilde{V}_\delta^\alpha(0) = \delta$, it holds for $t \in [0,T]$ that $V_\delta^\alpha(t) - T \theta(W) - \delta \le \widetilde{V}_\delta^\alpha(t) - \widetilde{V}_\delta^\alpha(0)$, hence $\sup_{0 \le t \le T} V_\delta^\alpha(t) \ge T \theta(W) + 2 \delta \Rightarrow \sup_{0 \le t \le T} \{ \widetilde{V}_\delta^\alpha(t) - \widetilde{V}_\delta^\alpha(0) \} \ge \delta$. In particular, the probability bound established in \eqref{eq:V_tilde_probability_bound} also holds for the event $\{\sup_{0 \le t \le T} V_\delta^\alpha(t) \ge T \theta(W) + 2 \delta \}$. 
It remains to choose $\delta$. In light of the restrictions on $\delta$ in Lemma \ref{lemma:V_properties}, and since $\theta(W)^2 \ge 1/N$ (see Remark \ref{remark:spectral_density_homogenous}), a valid choice is $\delta : = 4(1 + T )\cR \theta(W) + \frac{\epsilon}{2 | \cS |}$ (where $\epsilon > 0$ is arbitrary). Putting everything together shows that
\[
\p \left( \sup_{0 \le t \le T} V_\delta^\alpha(t) \ge 10T \cR ( 1 + T  ) \theta(W) + \frac{\epsilon}{| \cS |} \right) \le \mathrm{exp} \left( - \frac{N \epsilon^2}{160  \cR | \cS |^2 e^T } \right). 
\] 
Above, we have used \eqref{eq:V_tilde_probability_bound} and $\delta^2 \ge \frac{\epsilon^2}{4 | \cS|^2}$. Taking a union bound over $\alpha \in \cS$, and using the inequality $1 + x \le e^x$ to simplify terms in the event of interest, we have that 
\[
\p \left( \sup_{0 \le t \le T} \sum_{\alpha \in \cS} V_\delta^\alpha(t) \ge 10 T \cR | \cS | e^{T} \theta(W) + \epsilon \right) \le | \cS | \mathrm{exp} \left( - \frac{ N \epsilon^2}{160  \cR | \cS |^2 e^T } \right).
\]
Finally, the desired result follows readily from \eqref{eq:M_V_inequality}. 
\end{proof}
\vspace{-0.3cm}
\section{Conclusion}
We established a generic theory that reveals when stochastic population processes are characterized by their mean-field approximation. Based on whether agent interactions are spectrally or locally dense, the CMFA or the NIMFA may be most accurate. Our technical results establish exponential concentration inequalities for non-Markov processes, which may be of independent interest. We also illustrated through simulations that using the CMFA instead of the NIMFA can lead to significant errors in understanding stochastic population processes. 
In future work, we will derive richer characterizations of the stochastic population process {\it beyond} finite time horizons (e.g., metastability) and tailor our theory to further applications in game theory and epidemiology.

\vspace{-0.6cm} 

\bibliographystyle{siamplain}
\bibliography{mf}

\appendix

\section{Proofs of Lemmas \ref{lemma:Mav_concentration} and \ref{lemma:V_properties}}
\label{sec:martingale_properties}
We start by proving some useful intermediate results about the $M_{\mathbf{p}_i}(t)$'s. 
%\vspace{-0.2cm} 
\begin{lemma}
\label{lemma:M_properties}
Let $P \in \mathbb{R}^{N \times N}$ be non-negative and sub-stochastic with maximum column sum at most $\cR$. For all $t \ge 0$ and all $\xi$ sufficiently small, 
\begin{enumerate}
\item \label{item:M_as_bound}
Almost surely, $| M_{\mathbf{p}_i}^\alpha(t) | \le 1 + t$ for all $i \in [N]$ and $\alpha \in \cS$; 
\item \label{item:M_squared_difference}
$\E [ \frac{1}{N} \sum_{i \in [N]} ( M_{\mathbf{p}_i}^\alpha(t + \xi)^2 - M_{\mathbf{p}_i}^\alpha(t)^2 )  \vert \cF_t ] \le 2 \xi \theta(P)^2 + o ( \xi);$
\item \label{item:M_squared_squared}
$\E [ (\frac{1}{N} \sum_{i \in [N]} ( M_{\mathbf{p}_i}^\alpha(t + \xi)^2 - M_{\mathbf{p}_i}^\alpha(t)^2 ) )^2 \vert \cF_t ] \le \frac{8 \cR \xi}{N} ( \theta(P)^2 + \frac{1}{N} \sum_{i \in [N]} M_{\mathbf{p}_i}^\alpha(t)^2 )$.
\end{enumerate}
\end{lemma}
\begin{proof}
We start by proving Item \#\ref{item:M_as_bound}. To this end, we can bound $|M_{\mathbf{p}_i}^\alpha(t) | \le |Y_{\mathbf{p}_i}^\alpha(t) - Y_{\mathbf{p}_i}^\alpha(t)| + | \int_0^t \Phi_{\mathbf{p}_i}^\alpha( \mathbf{Y}(s)) ds |$. The first term is at most 1 since $Y_j^\alpha(t) \in \{0,1 \}$ and $P$ is a sub-stochastic matrix; the second term is at most $t$ since $| \Phi_{\mathbf{p}_i}^\alpha ( z) | \le 1$ for all $z \in \Delta( \cS)^N$ (see \eqref{eq:mean_field}). The desired claim immediately follows. 
Next, we prove Item \#\ref{item:M_squared_difference}. By the martingale property of $M_{\mathbf{p}_i}(t)$,
\begin{align}
\E [ M_{\mathbf{p}_i}^\alpha(t + \xi)^2 - M_{\mathbf{p}_i}^\alpha(t)^2 \vert \cF_t ] & = \E [ (M_{\mathbf{p}_i}^\alpha(t + \xi) - M_{\mathbf{p}_i}^\alpha(t) )^2 \vert \cF_t ] \nonumber \\
\label{eq:M_squared_difference}
& \hspace{-3cm} \le 2 \E [ ( Y_{\mathbf{p}_i}^\alpha(t + \xi) - Y_{\mathbf{p}_i}^\alpha(t) )^2 \vert \cF_t ] + 2 \E \left[ \left( \int_t^{t + \xi} \xi \Phi_{\mathbf{p}_i}^\alpha( \mathbf{Y}(s) ) ds \right)^2 \vert \cF_t \right]
\end{align}
Since $| \Phi_{\mathbf{p}_i}^\alpha( \mathbf{Y}(s) ) | \le 1$, the second term above is $O( \xi^2)$. To bound the first term above, notice that $|Y_j^\alpha(t + \xi) - Y_j^\alpha(t)|$ is at most the number of times agent $j$'s clock rings in the interval $[t, t + \xi]$, which is a $\mathrm{Poi}( \xi r_j)$ random variable. As agent clock rings are independent and since we assume $r_j < 1$ for all $j \in [N]$, $| Y_{\mathbf{p}_i}^\alpha (t + \xi) - Y_{\mathbf{p}_i}^\alpha(t) |$ can be stochastically bounded by $\sum_{j \in [N]} p_{ij} X_j$, where the $X_j$'s are independent $\mathrm{Poi}( \xi)$ random variables. Hence 
\begin{multline*}
\E [ ( Y_{\mathbf{p}_i}^\alpha( t + \xi) - Y_{\mathbf{p}_i}^\alpha(t) )^2 \vert \cF_t ]  \le \E \left[ \left( \sum_{j \in [N]} p_{ij} X_j \right)^2 \right] \\
 = \sum_{j \in [N]} p_{ij}^2 \E [ X_j^2 ] + \sum_{j , k \in [N]: j \neq k } p_{ij} p_{ik} \E [ X_j] \E [ X_k] = \xi \sum_{j \in [N]} p_{ij}^2 + O ( \xi^2 ). 
\end{multline*}
The display above combined with \eqref{eq:M_squared_difference} proves Item \#\ref{item:M_squared_difference}. 
Finally, we prove Item \#\ref{item:M_squared_squared}. For $i \in [N]$, define $U_i^\alpha(t) : = M_{\mathbf{p}_i}^\alpha(t + \xi) - M_{\mathbf{p}_i}^\alpha(t) = Y_{\mathbf{p}_i}^\alpha(t + \xi) - Y_{\mathbf{p}_i}^\alpha(t) - \int_t^{t + \xi} \Phi_{\mathbf{p}_i}^\alpha( \mathbf{Y}(s)) ds$ as well as $U_{\mathbf{p}_i}^\alpha(t) : = \sum_{j \in [N]} p_{ij} U_j^\alpha(t)$. Note that we have the representation $M_{\mathbf{p}_i}^\alpha(t + \xi)^2 - M_{\mathbf{p}_i}^\alpha(t)^2 = U_{\mathbf{p}_i}^\alpha(t)^2 + 2 U_{\mathbf{p}_i}^\alpha(t) M_{\mathbf{p}_i}^\alpha(t)$. Using this representation as well as the inequality $( x + y)^2 \le 2 x^2 + 2 y^2$, we can upper bound the conditional expectation in Item \#\ref{item:M_squared_squared} by 
\begin{equation}
\label{eq:U_UM_decomposition}
\frac{2}{N^2} \E \left[ \left. \left( \sum_{i \in [N]} U_{\mathbf{p}_i}^\alpha(t)^2 \right)^2 \right \vert \cF_t \right] + \frac{8}{N^2} \E \left[ \left. \left( \sum_{i \in [N]} U_{\mathbf{p}_i}^\alpha(t) M_{\mathbf{p}_i}^\alpha(t) \right)^2 \right \vert \cF_t \right]
\end{equation}
We start by bounding the first term in \eqref{eq:U_UM_decomposition}, which essentially amounts to characterizing terms of the form $U_{\mathbf{p}_i}^\alpha(t)^2 U_{\mathbf{p}_j}^\alpha(t)^2$.
By the definition of $U_{\mathbf{p}_i}^\alpha(t)$,
%Recalling that $U_{\mathbf{p}_i}^\alpha(t) = Y_{\mathbf{p}_i}^\alpha(t + \xi) - Y_{\mathbf{p}_i}^\alpha(t) - \int_t^{t + \xi} \Phi_{\mathbf{p}_i}^\alpha( \mathbf{Y}(s) )$,
we have that
\begin{align}
U_{\mathbf{p}_i}^\alpha(t)^2 & \le 2 \left( Y_{\mathbf{p}_i}^\alpha( t + \xi) - Y_{\mathbf{p}_i}^\alpha(t) \right)^2 + 2 \left( \int_t^{t + \xi} \Phi_{\mathbf{p}_i}^\alpha( \mathbf{Y}(s) ) ds \right)^2 \nonumber \\
\label{eq:U_squared}
&  \le 2 ( Y_{\mathbf{p}_i}^\alpha(t + \xi) - Y_{\mathbf{p}_i}^\alpha(t) )^2 + 2 \xi^2.
\end{align}
Above, the first inequality uses $(a + b)^2 \le 2 a^2 + 2b^2$, and the second inequality uses that $| \Phi_{\mathbf{p}_i}^\alpha(z) | \le 1$ for all $z \in \Delta(\cS)^N$. Now, to study the final expression in \eqref{eq:U_squared}, let $\{ X_k \}_{k \in [N]}$ be a collection of i.i.d. $\mathrm{Poi}(\xi)$ random variables. Using the same arguments in the proof of Item \#\ref{item:M_squared_difference}, it is readily seen that $| Y_{\mathbf{p}_i}^\alpha(t + \xi) - Y_{\mathbf{p}_i}^\alpha(t) |$ and $| Y_{\mathbf{p}_j}^\alpha(t + \xi) - Y_{\mathbf{p}_j}^\alpha(t) |$ are stochastically dominated by $\sum_{k \in [N]} p_{ik} X_k$ and $\sum_{k \in [N]} p_{jk} X_k$, respectively. Hence
\begin{align*}
\E [ U_{\mathbf{p}_i}^\alpha(t)^2 U_{\mathbf{p}_j}^\alpha(t)^2 \vert \cF_t ] & \le 4 \E \left[ \left( \sum_{k \in [N]} p_{ik} X_k \right)^2 \left( \sum_{k \in [N]} p_{jk} X_k \right)^2 \right] + o ( \xi) \\
& \hspace{-2cm} = 4  \sum_{a,b,c,d \in [N]} p_{ia} p_{ib} p_{jc} p_{jd} \E [ X_a X_b X_c X_d ] + o ( \xi)  = 4 \xi \sum_{k \in [N]} p_{ik}^2 p_{jk}^2 + o(\xi).
\end{align*}
In the display above, the first inequality is due to \eqref{eq:U_squared} and the stochastic dominance arguments, the equality on the second line is due to expanding the squares, and the final equality follows since the probability that both $X_a$ and $X_b$ are positive for distinct $a,b$ is $O(\xi^2)$. Putting everything together, we have that
\begin{align}
\E \left[ \left. \left( \sum_{i \in [N]} U_{\mathbf{p}_i}^\alpha(t)^2 \right)^2  \right \vert \cF_t \right] & = \sum_{i,j \in [N]} \E [ U_{\mathbf{p}_i}^\alpha(t)^2 U_{\mathbf{p}_j}^\alpha(t)^2 \vert \cF_t ] \le 4 \xi \sum_{i,j,k \in [N]} p_{ik}^2 p_{jk}^2 + o(\xi) \nonumber \\
\label{eq:U_squared_squared_bound}
& \le 4 \xi \sum_{i,k \in [N]} p_{ik}^2 \left( \sum_{j \in [N]} p_{jk} \right) + o( \xi) \le 4 \cR \xi \| P \|_F^2 + o ( \xi). 
\end{align}
In the final inequality above, we have used that the maximum column sum in $P$ is at most $\cR$. 
We now bound the second term in \eqref{eq:U_UM_decomposition}. To this end, we will use the representation $\sum_{i \in [N]} U_{\mathbf{p}_i}^\alpha(t) M_{\mathbf{p}_i}^\alpha(t) = \sum_{j \in [N]} B_j U_j^\alpha(t)$, where $B_j : = \sum_{i \in [N]} M_{\mathbf{p}_i}^\alpha(t) p_{ij}$. Since $B_j$ is $\cF_t$-measurable for all $j \in [N]$,
it follows that $\E [ ( \sum_{i \in [N]} U_{\mathbf{p}_i}^\alpha(t) M_{\mathbf{p}_i}^\alpha(t) )^2 \vert \cF_t]$ $= \sum_{j,k} B_j B_k \E [ U_j^\alpha(t) U_k^\alpha(t) \vert \cF_t ]$. To bound the conditional expectation in the summation, notice that $U_j^\alpha(t) \le | Y_j^\alpha(t + \xi) - Y_j^\alpha(t) | + | \int_t^{t + \xi} \Phi_j^\alpha( \mathbf{Y}(s) ) ds | \le | Y_j^\alpha(t + \xi) - Y_j^\alpha(t) | + \xi$. Hence
\begin{multline}
\label{eq:Uj_Uk}
\E [ U_j^\alpha(t) U_k^\alpha(t) \vert \cF_t ]  \le \E [ | Y_j^\alpha(t + \xi) - Y_j^\alpha(t) | | Y_k^\alpha(t + \xi) - Y_k^\alpha(t) | \vert \cF_t ] \\
+ \xi ( \E [ | Y_j^\alpha(t + \xi) - Y_j^\alpha(t) | \vert \cF_t ] + \E [ | Y_k^\alpha(t + \xi) - Y_k^\alpha(t) | \vert \cF_t ] ) + o( \xi).
\end{multline}
Again, using the stochastic dominance of $\{ | Y_j^\alpha(t + \xi) - Y_j^\alpha(t) | \}_{j \in [N]}$ by $\{ X_j \}_{j \in [N]}$, a sequence of i.i.d. $\mathrm{Poi}(\xi)$ random variables, \eqref{eq:Uj_Uk} implies that $\E[ U_j^\alpha(t) U_k^\alpha(t) \vert \cF_t ] = O(\xi^2)$ for $j \neq k$, and $\E [ U_j^\alpha(t)^2 \vert \cF_t ] \le \xi + O(\xi^2)$. We can then bound
\begin{align}
\sum_{j,k \in [N]} B_j B_k \E [ U_j^\alpha(t) U_k^\alpha(t) \vert \cF_t ] & \le \xi \sum_{j \in [N]} B_j^2 + o ( \xi) = \xi \cR^2 \sum_{j \in [N]} \left( \sum_{i \in [N]} M_{\mathbf{p}_i}^\alpha(t) \frac{p_{ij}}{\cR} \right)^2 \nonumber \\
\label{eq:UM_B_bound}
& \le \xi \cR \sum_{i,j \in [N]} M_{\mathbf{p}_i}^\alpha(t)^2 p_{ij} \le \xi \cR \sum_{i \in [N]} M_{\mathbf{p}_i}^\alpha(t)^2. 
\end{align}
Above, the second inequality is due to Jensen's inequality, since all column sums of $P$ are at most $\cR$.
Together, \eqref{eq:U_UM_decomposition}, \eqref{eq:U_squared_squared_bound} and \eqref{eq:UM_B_bound} imply the desired result. 
\end{proof}
We are now ready to prove the main results of this section. 
\begin{proof}[Proof of Lemma \ref{lemma:Mav_concentration}]
Set $P : = \mathbf{11}^\top/ N$, so that $M_{\mathbf{p}_i}(t) = M_{av}(t)$ and $\theta(P)^2 = 1/N$. It is readily seen from the definition of $M_{av}(t)$ that its jumps
are almost surely at most $1/N$, and Item \#\ref{item:M_squared_difference} of Lemma \ref{lemma:M_properties} implies that $\langle M_{av}^\alpha \rangle_t \le 2T/N$ for $t \in [0,T]$. The desired result now follows from applying Lemma \ref{lemma:freedman} to the supermartingales $M_{av}^\alpha(t)$ and $- M_{av}^\alpha(t)$, as well as taking a union bound over $\alpha \in \cS$. 
\end{proof}
\begin{proof}[Proof of Lemma \ref{lemma:V_properties}]
We start with the useful chain of inequalities
\begin{equation}
\label{eq:V_increment_bound_1}
V^\alpha_\delta( t+ \xi) - V^\alpha_\delta(t) \le \sum_{i \in [N]} \frac{ M_{\mathbf{p}_i}^\alpha( t + \xi)^2 - M_{\mathbf{p}_i}^\alpha(t)^2 }{2N V^\alpha_\delta(t) } \le \sum_{i \in [N]} \frac{ M_{\mathbf{p}_i}^\alpha(t + \xi)^2 - M_{\mathbf{p}_i}^\alpha(t)^2 }{2N \theta(W)}.
\end{equation}
Above, the first inequality uses that $\sqrt{x} - \sqrt{y} \le (x - y) / (2 \sqrt{y} )$ for $x, y > 0$ (which follows from the concavity of the square root function), and the second inequality uses $V^\alpha_\delta(t) \ge \delta \ge \theta(W)$. Item \#\ref{item:V_increment_bound} now follows immediately by taking an expectation on both sides with respect to $\cF_t$ and invoking Item \#\ref{item:M_squared_difference} of Lemma \ref{lemma:M_properties}. 
We now turn to the proof of Item \#\ref{item:V_absolute_jump}.
Noting that the function $x \mapsto \sqrt{ x + \delta^2}$ for $x \ge 0$ is $1/(2\delta)$-Lipschitz, we have the following bound for $t , t + \xi \in [0,T]$:
\begin{align}
| V^\alpha_\delta(t + \xi) - V^\alpha_\delta(t) | & \le \frac{1}{2 \delta N} \sum_{i \in [N]} \left| M_{\mathbf{p}_i}^\alpha(t + \xi)^2 - M_{\mathbf{p}_i}^\alpha(t)^2 \right| \nonumber \\
& \hspace{-2cm} = \frac{1}{2 \delta N} \sum_{i \in [N]} \left | Y_{\mathbf{p}_i}^\alpha(t + \xi) - Y_{\mathbf{p}_i}^\alpha(t) - \int_t^{t + \xi} \Phi_{\mathbf{p}_i}^\alpha( \mathbf{Y}(s)) ds \right | | M_{\mathbf{p}_i}^\alpha(t + \xi) + M_{\mathbf{p}_i}^\alpha(t) | \nonumber \\
\label{eq:V_increment_bound_2}
& \hspace{-2cm} \le \frac{T+ 1}{\delta N} \left( \xi + \sum_{i \in [N]} | Y_{\mathbf{p}_i}^\alpha(t + \xi) - Y_{\mathbf{p}_i}^\alpha(t) | \right),
\end{align}
where, in the final inequality, we have used Item \#\ref{item:M_as_bound} of Lemma \ref{lemma:M_properties} to bound $| M_{\mathbf{p}_i}^\alpha(t + \xi) + M_{\mathbf{p}_i}^\alpha(t) |$ and also used that $| \Phi_{\mathbf{p}_i}^\alpha(z) | \le 1$ for all $z \in \Delta( \cS)^N$ to bound absolute value of the integral by $\xi$. 
Next, suppose that $t \in [0,T]$ is a point of discontinuity for $V^\alpha_\delta(t)$; then the size of the jump at $t$ is given by $\lim_{\xi \to 0} | V^\alpha_\delta( t) - V^\alpha_\delta(t - \xi) |$. When such a jump occurs, only one agent changes their state almost surely. If $j$ is the agent which changes their state at time $t$, we have from \eqref{eq:V_increment_bound_2} that 
$\lim_{\xi \to 0} | V^\alpha_\delta( t) - V^\alpha_\delta(t - \xi) |  \le  \sum_{i \in [N]} \frac{ (T+1) p_{ij}}{2 \delta N} \le \frac{ (T+1) \cR}{2 \delta N}$.
As this bound holds uniformly for all updating agents, Item \#\ref{item:V_absolute_jump} follows. 
Finally, we prove Item \#\ref{item:V_quadratic_variation}. By the concavity of the square root function, we have for $x, y > 0$ that $|\sqrt{x} - \sqrt{y} | \le |x - y| / (2 \min \{ \sqrt{x}, \sqrt{y} \})$; hence
\begin{equation}
\label{eq:V_increment_bound_3}
|V^\alpha_\delta(t + \xi) - V^\alpha_\delta(t) | \le \frac{ \left| \frac{1}{N} \sum_{i \in [N]} M_{\mathbf{p}_i}^\alpha(t + \xi)^2 - M_{\mathbf{p}_i}^\alpha(t)^2 \right| }{2 \min \{ V^\alpha_\delta(t), V^\alpha_\delta(t + \xi) \} }.
\end{equation}
We next work on simplifying the bound above. If no agent updates their state in the interval $[t, t + \xi]$, then $Y_{\mathbf{p}_i}^\alpha(t + \xi) = Y_{\mathbf{p}_i}^\alpha(t)$ for all $i \in [N]$, hence by \eqref{eq:V_increment_bound_2}, $V_\delta^\alpha(t + \xi) \ge V_\delta^\alpha(t) - (T+1) \xi / ( \delta N)$. If $\xi$ is sufficiently small so that $(T + 1)\xi / (\delta N) \le \delta / 2$, then it further holds that $V_\delta^\alpha(t + \xi) \ge V_\delta^\alpha(t) / 2$, since $V_\delta^\alpha(t) \ge \delta$. On the other hand, if only a single agent updates their state in the interval $[t, t + \xi]$, both \eqref{eq:V_increment_bound_2} and Item \#\ref{item:V_absolute_jump} imply that $V_\delta^\alpha(t + \xi) \ge V_\delta^\alpha(t) - (T+1) \xi / (\delta N) - (T+1) \cR / (2 \delta N)$. If $\xi$ is sufficiently small so that $(T+1) \xi / (\delta N) \le \delta / 4$ and $\delta^2 \ge 4 (T+1) \cR / N$, then $V_\delta^\alpha(t + \xi) \ge V_\delta^\alpha(t) - \delta / 2 \ge V_\delta^\alpha(t) / 2$. Now, since the probability that more than one agent updates in $[t, t + \xi]$ is $O( \xi^2)$, it holds with probability $1 - O(\xi^2)$ that $V_\delta^\alpha(t + \xi) \ge V_\delta^\alpha(t)/2$. Along with the boundedness of $V_\delta^\alpha(t)$ and \eqref{eq:V_increment_bound_3}, this shows
\begin{multline*}
\E [ ( V^\alpha_\delta(t + \xi) - V^\alpha_\delta(t) )^2 \vert \cF_t ] \le \frac{ \E [ \left. ( \frac{1}{N} \sum_{i \in [N]} M_{\mathbf{p}_i}^\alpha(t + \xi)^2 - M_{\mathbf{p}_i}^\alpha(t)^2 )^2 \right \vert \cF_t ] }{V^\alpha_\delta(t)^2 } + o ( \xi) \\
\le \frac{8 \cR \xi}{N} \left( \frac{ \theta(P)^2 + \frac{1}{N} \sum_{i \in [N]} M_{\mathbf{p}_i}^\alpha(t)^2 }{ V_\delta^\alpha(t)^2 } \right) + o(\xi) \le \frac{ 8 \cR \xi}{N} + o ( \xi). 
\end{multline*}
Above, the first inequality on the second line is due to Item \#\ref{item:M_squared_squared} of Lemma \ref{lemma:M_properties}, and the final inequality uses that $\delta \ge \theta(W) \ge \theta(P)$.
\end{proof}

\end{document}